\tikzstyle{every picture}+=[remember picture,inner xsep=0,inner ysep=0.25ex]
\def\VR{\kern-\arraycolsep\strut\vrule &\kern-\arraycolsep}
\def\vr{\kern-\arraycolsep & \kern-\arraycolsep}
\newcommand*{\sublabel}[1]{%
    \let\old@currentlabel\@currentlabel%
    \renewcommand{\@currentlabel}{\theenumii}%
    \label{#1}%
    \let\@currentlabel\old@currentlabel%
}
\DeclareMathOperator{\Rank}{rank}
\DeclareMathOperator{\Card}{card}
\DeclareMathOperator{\Pos}{Pos}
\DeclareMathOperator{\fa}{fa}
\DeclareMathOperator{\iso}{iso}
\DeclareMathOperator{\odd}{odd}
\DeclareMathOperator{\even}{even}
\DeclareMathOperator{\Int}{int}
\DeclareMathOperator{\proj}{pr}
\DeclareMathOperator{\ev}{ev}
\def\widebreve{\mathpalette\wide@breve}
\def\wide@breve#1#2{\sbox\z@{$#1#2$}%
     \mathop{\vbox{\m@th\ialign{##\crcr
\kern0.08em\brevefill#1{0.8\wd\z@}\crcr\noalign{\nointerlineskip}%
                    $\hss#1#2\hss$\crcr}}}\limits}
\def\brevefill#1#2{$\m@th\sbox\tw@{$#1($}%
  \hss\resizebox{#2}{\wd\tw@}{\rotatebox[origin=c]{90}{\upshape(}}\hss$}
\newcommand{\RR}{\mathbb R}
\newcommand{\dd}{{\rm d}}
\newcommand{\NN}{\mathbb N}
\newcommand{\ZZ}{\mathbb Z}
\newcommand{\bdx}{\mathbf x}
\newcommand{\cS}{\mathcal{S}}
\newcommand{\cI}{\mathcal I}
\newcommand{\cR}{\mathcal R}
\newcommand{\cL}{\mathcal L}
\newcommand{\cM}{\mathcal M}
\newcommand{\cH}{\mathcal H}
\newcommand{\cZ}{\mathcal Z}
\newcommand{\cC}{\mathcal C}
\newcommand{\benu}{\begin{enumerate}}
\newcommand{\eenu}{\end{enumerate}}
\newcommand{\bop}{\begin{opomba}}
\newcommand{\eop}{\end{opomba}}
\newcommand{\supp}{\mathrm{supp}}
\newtheorem{theorem}{Theorem}[section]
\newtheorem{corollary}[theorem]{Corollary}
\newtheorem{proposition}[theorem]{Proposition}
\theoremstyle{definition}
\newtheorem{example}[theorem]{Example}
\newcommand{\mc}{\mathcal}
\newcommand{\mbb}{\mathbb}
\newcommand{\mbf}{\mathbf}
\definecolor{green-new}{rgb}{0.0, 0.5, 0.0}
\definecolor{cyan}{rgb}{0.0, 0.8, 1.0}
\theoremstyle{remark}
\newtheorem{remark}[theorem]{Remark}
\numberwithin{equation}{section}
\begin{document}

\numberwithin{equation}{section}

\title[The truncated univariate rational moment problem]
{The truncated univariate rational moment problem}

\author[R. Nailwal]{Rajkamal Nailwal}
\address{Rajkamal Nailwal, Institute of Mathematics, Physics and Mechanics, Ljubljana, Slovenia.}
\email{rajkamal.nailwal@imfm.si}

\author[A. Zalar]{Alja\v z Zalar}
\address{Alja\v z Zalar, 
Faculty of Computer and Information Science, University of Ljubljana  \& 
Faculty of Mathematics and Physics, University of Ljubljana  \&
Institute of Mathematics, Physics and Mechanics, Ljubljana, Slovenia.}
\email{aljaz.zalar@fri.uni-lj.si}
\thanks{The second-named author was supported by the ARIS (Slovenian Research and Innovation Agency)
research core funding No.\ P1-0228 and grant No.\ J1-50002.}

\begin{abstract}
Given a closed subset $K$ in $\RR$, the rational $K$--truncated moment problem ($K$--RTMP) asks to characterize the existence of a positive Borel measure $\mu$, supported on $K$, such that a linear functional $\cL$, defined on all rational functions of the form $\frac{f}{q}$, where $q$ is a fixed polynomial with all real zeros of even order and $f$ is any real polynomial of degree at most $2k$, is an integration with respect to $\mu$.
The case of a compact set $K$ was solved in \cite{Cha94},
but there is no argument that ensures that $\mu$ vanishes on all real zeros of $q$. An obvious necessary condition for the solvability of the $K$--RTMP is that $\cL$ is nonnegative on every $f$ satisfying $f|_{K}\geq 0$. If $\cL$ is strictly positive on every $0\neq f|_{K}\geq 0$, we add the missing argument from \cite{Cha94} and also bound the number of atoms in a minimal representing measure. We show by an example that nonnegativity of $\cL$ is not sufficient and add the missing conditions to the solution. We also solve the $K$--RTMP for unbounded $K$ and derive the solutions to the strong truncated Hamburger moment problem and the truncated moment problem on the unit circle as special cases.
 \looseness=-1
\end{abstract}

\subjclass[2020]{Primary 47A57, 47A20, 44A60; Secondary 
15A04, 47N40.}

\keywords{Truncated moment problem, rational truncated moment problem, representing measure, moment matrix, localizing moment matrix, preordering.}
\date{\today}
\maketitle

\section{Introduction}
\label{statement-K-RTMP}

Let 
$\RR[x]_{\leq k}:=\{f\in \RR[x]\colon \deg f\leq k\}$ 
stand for the set of real univariate polynomials of degree at most $k$. 
Let $K\subseteq \RR$
be a closed set in $\RR$,
$\lambda_1,\ldots,\lambda_p$
distinct real numbers 
with $p\in \NN\cup\{0\}$,
$\eta_{1},\ldots,\eta_{r}$
distinct positive real numbers 
with
$r\in \NN\cup \{0\}$,
$k_0,\ldots,k_p\in \NN$,
$\ell_1,\ldots,\ell_r\in \NN$,
\begin{equation}
\label{def-of-q}
q(x):=\prod_{j=1}^p(x-\lambda_j)^{2k_j}\cdot 
                \prod_{j=1}^r(x^2+\eta_j)^{\ell_j},
\end{equation}
$2k:=\sum_{j=0}^p 2k_j+\sum_{j=1}^{r}2\ell_j$
and
\begin{equation}
\label{def-of-R}
\cR^{(2k)}
=
\left\{
\frac{f}{q}
                \colon
                f\in \RR[x]_{\leq 2k}
\right\}.
\end{equation}
The \textbf{rational $K$--truncated moment problem ($K$--RTMP)} asks to characterize the existence of a positive Borel measure $\mu$, supported on $K$, such that a linear functional $\cL:\cR^{(2k)}\to \RR$ has an integral representation
\begin{equation}
\label{functional-rational}
\cL(R)=\int_{K}R(x)\dd\mu(x)\quad \forall R\in \cR^{(2k)}.
\end{equation}
If $\mu$ is such a measure, then we call it a \textbf{$K$--representing measure ($K$--rm)} for $\cL$.
The points $\lambda_j$ (resp.\ $\pm i\sqrt{\eta_j}$) are called \textbf{real poles} (resp.\ \textbf{complex poles}). 

\begin{remark}
    An equivalent formulation of the problem \cite{Cha94}, more common in the moment problem literature, is the following. 
    Assume the notation above.
    Given sequences
$\{\gamma_{i}^{(j)}\}_{i=0}^{2k_j}\subset \RR$
where $j=0,\ldots,p$ and $k_j\in \NN\cup \{0\}$,
and sequences
$\{\gamma_{i}^{(p+j,s)}\}_{i=1}^{\ell_j}$
where $j=1,\ldots,r$, $s=0,1$, and $\ell_j\in \NN\cup\{0\},$ characterize the existence of a positive Borel measure $\mu$, supported on $K$, such that 
\begin{align}
\label{rational-MP}
\begin{split}
\gamma_{i}^{(0)}
    &=\int_Kx^i \dd\mu(x),\quad i=0,\ldots,2k_0,\\
\gamma_{i}^{(j)}
    &=\int_K \frac{1}{(x-\lambda_j)^i} \dd\mu(x),
            \quad 
            j=1,\ldots,p, \;
            i=1,\ldots,2k_j,\\
\gamma_{i}^{(p+j,0)}
    &=\int_K \frac{1}{(x^2+\eta_j)^i} \dd\mu(x),
            \quad 
            j=1,\ldots,r,\; 
            i=1,\ldots,\ell_j,\\
\gamma_{i}^{(p+j,1)}
    &=\int_K \frac{x}{(x^2+\eta_j)^i} \dd\mu(x),
            \quad 
            j=1,\ldots,r,\; 
            i=1,\ldots,\ell_j.
\end{split}
\end{align}
Equivalence of the formulations \eqref{functional-rational} and \eqref{rational-MP} is due to partial fractions decomposition, i.e., every $R\in \cR^{(2k)}$ can be expressed as 
$$
\sum_{i=0}^{2k_0}a_ix^{i}+
\sum_{j=1}^{p}\sum_{i=1}^{2k_j}\frac{b_{i,j}}{(x-\lambda_j)^i}+
\sum_{j=1}^{r}\sum_{i=1}^{\ell_j}\frac{c_{i,j}}{(x^2+\eta_j)^i}+
\sum_{j=1}^{r}\sum_{i=1}^{\ell_j}\frac{d_{i,j}x}{(x^2+\eta_j)^i}$$
for  some 
     $a_i, b_{i,j}, c_{i,j}, d_{i,j}\in \RR$.
     \hfill$\vartriangle$
\end{remark}

The univariate rational moment problems for the interval (bounded or unbounded),
especially the full version without bounds on the degrees of moments (i.e., $k_j$ can be $\infty$
in \eqref{rational-MP} above) were studied extensively by Jones, Nj\r{a}stad and Thron \cite{JTW80,JT81,JNT84,Nja85,NT86,Nja87,Nja88}. 
The main approach for their results was a theory of orthogonal and quasi-orthogonal rational functions.
For any compact set $K\subset \RR$
the $K$--RTMP was studied by Chandler \cite{Cha94} using duality with positive polynomials.
The motivation for this paper was to extend Chandler's solution from a compact set $K$, in which only real poles are allowed, to an arbitrary closed set $K\subseteq \RR$, in which complex poles are also present. Apart from the fact that the $K$--RTMP is interesting for its own sake, the application of the univariate reduction technique 
can also provide solutions to the bivariate truncated moment problem (TMP) on some algebraic curves, where all irreducible components are rational
(see \cite[Section 6]{BZ21} for $xy=0$; \cite{Zal22a} for $y^2=y$ and $y(y-a)(y-b)=0$, $a,b\in \RR\setminus\{0\}$, $a\neq b$;
\cite{Zal21} for $y=x^3$ and $y^2=x^3$;
 \cite{Zal22b} for $xy=1$ and $xy^2=1$; \cite{Zal23} for $y=q(x)$ and $yq(x)=1$, $q\in \RR[x]$; \cite{YZ24} for $y(ay+x^2+y^2)=0$, $a\in \RR\setminus \{0\}$, and $y(x-y^2)=0$).

We also mention that versions of the multidimensional rational moment problem 
for linear functionals on localizations of the polynomial algebra have been 
investigated in \cite{PV99,CMN11,Sch16}.

 A technique used in \cite{Cha94} to solve the $K$--RTMP is to convert the problem into the usual TMP, which concerns the integral representability of linear functionals on univariate polynomials of bounded degree with respect to the measure supported on $K$. This simplifies the problem since univariate TMP\textit{s} have been widely studied in the literature \cite{Akh65,KN77,Ioh82,CF91}, but one must additionally characterize when the measure vanishes on all real poles of the $K$--RTMP. This detail is not taken care of in \cite{Cha94}. Our first main result closes this gap for strictly positive functionals on $K$, i.e., functionals that are positive on every nonzero polynomial nonnegative on $K$. We extend the result to arbitrary closed sets $K$, where complex poles are allowed. We formulate the result in terms of the corresponding functional on polynomials, where at most countable closed set   in $K$ is to be avoided by the measure.
The proof is done by applying a more general result of di Dio and Schm\"udgen (see 
\cite[Proposition 2]{DS18} or \cite[Theorem 1.30]{Sch17}), which holds for strictly positive linear functionals on arbitrary finite-dimensional subspaces of the real vector space of continuous functions on a locally compact Hausdorff space.
\cite[Corollary 6]{DS18} also provides an upper bound for the Caratheodory number, i.e., the number of atoms needed in a minimal representing measure. Moreover, we provide a constructive proof in the case $K$ is a closed semialgebraic set, which also improves the upper bound on the Caratheodory number obtained by applying \cite[Corollary 6]{DS18}. In the proof, we essentially use a result of Blekherman et al \cite{BKRSV20}, which characterizes minimal quadrature rules for linear functionals on univariate polynomials of bounded degree.
Our second main result solves the $K$--RTMP for arbitrary closed $K$ for positive functionals that are not strictly positive, i.e., the functional can vanish on some nonzero polynomial that is nonnegative on $K$.
We construct a counterexample to the solution \cite[Proposition 2]{Cha94} for compact $K$, which misses additional conditions except $K$--positivity of the functional.
Finally, we apply our main results to obtain a solution to the strong Hamburger TMP \cite{Zal22b} and the TMP on the unit circle \cite{CF02}.

\subsection{Reader's guide} 

The paper is structured as follows.
In Section \ref{sec:prel} we introduce some further notation, show the correspondence between the $K$--RTMP and the corresponding univariate $K$--TMP, recall the result characterizing positive polynomials on $K$, the notions of localizing Hankel matrices, the solution to the $\RR$--TMP by Curto and Fialkow, and the characterization of minimal quadrature formulas in the nonsingular case by Blekherman et al.
In Section \ref{sec:main-results} we solve the $K$--TMP coming from the $K$--RTMP both in the nonsingular case (see Theorem \ref{thm:nonsingular}) and in the singular case (see Theorem \ref{compact-TRMP-extended}). Example \ref{counterexample} shows that $K$--positivity of the functional is not sufficient for the existence of a $K$--rm that avoids real poles.
Finally, in Section \ref{sec:examples} we derive the solutions to the strong Hamburger TMP (see Corollary \ref{cor:STHMP}) and the TMP on the unit circle (see Theorem \ref{circle-TMP}), and give an example (see Example \ref{counterexample-2}), which demonstrates the construction of the measure as in the proof of Theorem \ref{thm:nonsingular}.

\section{Preliminaries}
\label{sec:prel}

We write $\RR^{n\times m}$ for the set of $n\times m$ real matrices. For a matrix $M$ 
we call the linear span of its columns a \textbf{column space} and denote it by $\cC(M)$.
The set of real symmetric matrices of size $n$ will be denoted by $S_n$. 
For a matrix $A\in S_n$ the notation $A\succ 0$ (resp.\ $A\succeq 0$) means $A$ is \textbf{positive definite (pd)} (resp.\ \textbf{positive semidefinite (psd)}).

For a polynomial $f\in \RR[x]$
we denote by $\cZ(f):=\{x\in \RR\colon f(x)=0\}$ its set of zeros.


\subsection{Representing measures}
Assume the notation from \S\ref{statement-K-RTMP}. 
For $\cL:\cR^{(2k)}\to \RR$ we define a corresponding 
linear functional $L$ on $\RR[x]_{\leq 2k}$ by
\begin{equation}
    \label{Riesz-functional}  
    L:\RR[x]_{\leq 2k}\to \RR,\quad L(f):=\cL(fq^{-1}).
\end{equation}
We call a positive Borel measure  $\mu$, supported on a closed set $K\subseteq \RR$, a \textbf{$K$--representing measure ($K$--rm)} for $L$
if and only if 
\begin{equation*}
    L(f)=
    \int_K f\dd\mu\quad \forall f\in \RR[x]_{\leq 2k}.
\end{equation*}

For $x\in \RR$, $\delta_x$ 
stands for the Dirac measure supported on $x$.
By a \textbf{finitely atomic positive measure} on $\RR$ we mean a measure of the form $\mu=\sum_{j=0}^\ell \rho_j \delta_{x_j}$, 
where $\ell\in \NN$, each $\rho_j>0$ and each $x_j\in \RR$. The points $x_j$ are called 
\textbf{atoms} of the measure $\mu$ and the constants $\rho_j$ the corresponding \textbf{densities}.

Let $\Lambda\subseteq K$ be a set. We write
\begin{align*}
    \cM_{\cL,K}
    &:=
\{\mu\colon
\mu \text{ is a $K$--representing measure for $\cL$}\},\\
    \cM_{L,K}
    &:=
\{\mu\colon
\mu \text{ is a $K$--representing measure for $L$}\},\\
    \cM_{L,K, \Lambda}
    &:=
\{\mu\colon
\mu \text{ is a $K$--representing measure for $L$ with }\mu(\Lambda)=0\}.
\end{align*}
We denote by 
$\cM^{(\fa)}_{\cL,K}$, $\cM^{(\fa)}_{L,K}$ and
$\cM^{(\fa)}_{L,K,\Lambda}$
the subsets of $\cM_{\cL,K}$, $\cM_{L,K}$ and
$\cM_{L,K,\Lambda}$, respectively,
containing all finitely atomic measures.

Let $\mu$ be a Borel measure supported on $K$.
Let $f$ be a $\mu$-integrable functions. We denote by $f\cdot \mu$
a Borel measure on $K$, defined by 
\begin{equation} 
    \label{def:new-measure}
        (f\cdot \mu)(E):=\int_E f \dd\mu
\end{equation}
for every Borel set $E\subseteq K$.
\begin{proposition}
\label{cL-and-L-measures}
Let $q$ be as in \eqref{def-of-q}.
The following statements hold:
\begin{enumerate}
\item 
\label{equivalence-between-the-problems}
$\cM_{\cL,K}\neq \emptyset$
if and only if
$\cM_{L,K, \cup_{j=1}^p \{\lambda_j\}}
\neq \emptyset$.
\item 
\label{equivalence-between-the-problems-2}
$\cM_{\cL,K}^{(\fa)}\neq \emptyset$
if and only if
$\cM^{(\fa)}_{L,K, \cup_{j=1}^p \{\lambda_j\}}
\neq \emptyset$.
\item 
\label{phi-bijection}
A map
$$\Phi:\cM_{L,K, \cup_{j=1}^p \{\lambda_j\}}\to \cM_{\cL,K},
\quad \mu\mapsto q\cdot \mu,$$ 
is a bijection.
The inverse of $\Phi$ is 
$\Phi^{-1}(\mu)=\frac{1}{q}\cdot \mu$.
\end{enumerate}
\end{proposition}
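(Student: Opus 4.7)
My plan is to establish part (3) first — that $\Phi(\mu) = q\cdot \mu$ and $\Psi(\nu) := \frac{1}{q}\cdot \nu$ are mutually inverse maps between $\cM_{L,K,\cup_{j=1}^p\{\lambda_j\}}$ and $\cM_{\cL,K}$ — and then read off (1) and (2) as immediate corollaries once one checks that both maps preserve the property of being finitely atomic. The underlying algebraic fact making everything work is that $q$ is nonnegative on $\RR$, with real zero set exactly $\{\lambda_1,\ldots,\lambda_p\}$ (each $\lambda_j$ appearing with even multiplicity and each factor $x^2+\eta_j$ strictly positive). Thus $q$ and $1/q$ serve as legitimate Radon--Nikodym weights, provided the relevant measure either integrates $q$ or avoids $\{\lambda_1,\ldots,\lambda_p\}$.

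First I would check that $\Phi$ is well-defined. If $\mu \in \cM_{L,K,\cup_{j=1}^p\{\lambda_j\}}$, then $q \in \RR[x]_{\leq 2k}$ is $\mu$--integrable with total mass $L(q)$, so $q\cdot \mu$ is a finite positive Borel measure on $K$. For $R = f/q \in \cR^{(2k)}$ the identity $R\cdot q = f$ holds off $\cup_j\{\lambda_j\}$, which is $\mu$--null, hence
\begin{equation*}
\int_K R\, d(q\cdot \mu) = \int_K Rq\, d\mu = \int_K f\, d\mu = L(f) = \cL(R),
\end{equation*}
showing $q\cdot\mu \in \cM_{\cL,K}$.

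Next I would verify that $\Psi$ is well-defined, which is the step I expect to be the only delicate point. The crucial observation is that any $\nu \in \cM_{\cL,K}$ automatically satisfies $\nu(\{\lambda_j\})=0$ for each $j$: the rational function $\tfrac{1}{(x-\lambda_j)^{2k_j}}$ belongs to $\cR^{(2k)}$ and is therefore $\nu$--integrable, which would fail were $\nu$ to place positive mass at $\lambda_j$. Hence $1/q$ is $\nu$--a.e.\ defined and, being itself an element of $\cR^{(2k)}$, is $\nu$--integrable, so $\tfrac{1}{q}\cdot \nu$ is a finite positive Borel measure on $K$ vanishing on $\cup_j\{\lambda_j\}$. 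Running the computation above in reverse shows that $\Psi(\nu)$ represents $L$, and the identities $\Phi\Psi = \id = \Psi\Phi$ then follow from $q\cdot q^{-1} = 1$ off a $\mu$-- and $\nu$--null set.

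Finally, for (2), observe that $\Phi$ sends a finitely atomic $\mu = \sum_i \rho_i\delta_{x_i}$ (necessarily with $x_i \notin \{\lambda_1,\ldots,\lambda_p\}$) to $\sum_i \rho_i q(x_i)\delta_{x_i}$ with strictly positive weights $\rho_i q(x_i)$, and $\Psi$ similarly rescales the weights of a finitely atomic $\nu$ by $1/q(x_i)>0$. So $\Phi$ restricts to a bijection $\cM^{(\fa)}_{L,K,\cup_j\{\lambda_j\}}\to \cM^{(\fa)}_{\cL,K}$, yielding (2); and (1) is just the nonemptiness consequence of the bijection in (3). Beyond the observation forcing $\nu$ to avoid the real poles, I foresee no technical obstacle.
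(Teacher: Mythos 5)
Your proof is correct and takes essentially the same approach as the paper: prove part (3) by verifying $\mu\mapsto q\cdot\mu$ and $\nu\mapsto\frac{1}{q}\cdot\nu$ are well-defined and mutually inverse (with the same integrability argument forcing $\nu(\{\lambda_j\})=0$), then read off (1) and (2). If anything, you spell out a couple of points the paper leaves implicit, such as the preservation of finite atomicity and the finiteness of $q\cdot\mu$.
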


\begin{proof}
Note that \eqref{equivalence-between-the-problems}
and \eqref{equivalence-between-the-problems-2}
follow from \eqref{phi-bijection}.
So it suffices to prove \eqref{phi-bijection}.

Let $\mu\in \cM_{L,K, \cup_{j=1}^p \{\lambda_j\}}$
and $\frac{f}{q}\in \cR^{(2k)}$. We have
$$
\cL\Big(\frac{f}{q}\Big)
\stackrel{\eqref{Riesz-functional}}{=}
L(f)
=
\int_{K}f\dd\mu
=\int_K \frac{f}{q}q \dd\mu
=\int_K \frac{f}{q} \dd(q\cdot \mu)
,
$$
where the third equality is well--defined since $\mu(\cZ(q))=0$.
Hence, $q\cdot\mu\in \cM_{\cL,K}$. 

Conversely, let $\mu\in \cM_{\cL,K}$
and $f\in \RR[x]_{\leq 2k}$. We have
$$
L(f)
\stackrel{\eqref{Riesz-functional}}{=}
\cL\Big(\frac{f}{q}\Big)
=
\int_{K}\frac{f}{q} \dd\mu
=\int_K f\frac{1}{q} \dd\mu
=\int_K f \dd\big(\frac{1}{q}\cdot \mu\big).
$$
Hence, $\frac{1}{q}\cdot\mu\in \cM_{L,K,\cup_{j=1}^p \{\lambda_j\}}$. Clearly, $\mu(\{\lambda_j\})=0$ for each $j$ otherwise $\int_K \frac{1}{q}\dd\mu$ was not well--defined. 
\end{proof}

\begin{remark}
In \cite[p.\ 75]{Cha94} it is claimed that each $\mu\in \cM_{L,K}$ yields 
$q\cdot \mu\in \cM_{\cL,K}$. This is wrong since $\mu(\{\lambda_j\})$ could be nonzero in some $\lambda_j$. The $K$--RTMP in \cite{Cha94} for compact $K$ is solved under this claim. In the next section we solve the $K$--RTMP for any closed set $K$ using correct correspondence between measures for $L$ and $\cL$.
\hfill$\vartriangle$
\end{remark}


\subsection{Positive polynomials}

We denote by 
$$\Pos(K):=\{f\in \RR[x]\colon f(x)\geq 0\text{ for all }x\in K\}$$ 
the set 
of all polynomials, nonnegative on $K$.
Let 
$$\Pos_{\leq 2k}(K):=\Pos(K)\cap\RR[x]_{\leq 2k}.$$
We denote by $\sum \RR[x]^2$ (resp.\ $\sum \RR[x]^2_{\leq k}$) the set of all finite sums
of squares $p^2$ of polynomials, where $p\in \RR[x]$ 
(resp.\ $p\in\RR[x]_{\leq k}$).\\

We call a linear functional $L:\RR[x]_{\leq 2k}\to \RR$:
\begin{enumerate}
    \item 
    \textbf{$K$--positive}, if $L(f)\geq 0$ for every
        $f\in \Pos_{\leq 2k}(K)$.
    \item 
    \textbf{strictly $K$--positive}, if it is positive and $L(f)>0$ for every $0\neq f\in \Pos_{\leq 2k}(K)$.
    \item 
    \textbf{square--positive}, if $L(g)\geq 0$ for every $g\in\sum \RR[x]^2_{\leq k}$.
    \item \textbf{singular},
    if $L(g^2)=0$ for some $0\neq g$ such that $g^2\in \RR[x]_{\leq 2k}$.
\end{enumerate}

\subsection{Preordering and the natural description}
Given a finite set $S:=\{g_1,g_2,\ldots,g_n\}$ or a countable set $S:=\{g_i\}_{i=1}^{\infty}$ in $\RR[x]$ and $e:=(e_1,\ldots,e_m)\in \{0,1\}^m$,
let  $\underline{g}^e$ stand for $g_1^{e_1}g_2^{e_2}\cdots g_{m}^{e_m}.$
Let 
$$E:=\left\{
    \begin{array}{rl}
        \{0,1\}^n,& \text{if }S\text{ has }n\text{ elements},\\[0.2em]
        \cup_{j=1}^\infty \{0,1\}^j,&   \text{if }S\text{ is infinite},
    \end{array}
\right.$$
and 
    $$S^{\pi}:=\left\{\underline{g}^e:e \in E\right\}.$$
The \textbf{preordering generated by $S$} in $\RR[x]$ is defined by
	\begin{eqnarray*}
		T_{S} 
			&:=& 
				\Big\{\sum_{
                s\in S^\pi} \sigma_s s\colon \sigma_s\in 
				\sum \RR[x]^2\;\text{for each}\;s\text{ and }\sigma_s\neq 0\text{ for finitely many }s\Big\}.
	\end{eqnarray*}
For $d\in \NN\cup\{0\}$ we define the set 
	\begin{align*} 
		T^{(d)}_{S}
            &:=
            \Big\{\sum_{s\in S^\pi} 
            \sigma_s s\colon 
			\sigma_s \in \sum\RR[x]^2
            \text{ and }\deg(\sigma_s s)\leq d\; \text{for each}\;s, \;
            \sigma_s\neq 0\text{ for finitely many }s\Big\}.
	\end{align*}
	We call $T^{(d)}_{S}$ the \textbf{degree $d$ truncation} of the preordering $T_{S}$.\\

A set $S_K\subset \RR[x]$ is the
\textbf{natural description} of the closed set $K$, if it satisfies the following conditions:
	\begin{enumerate}
		\item[(a)] If $K$ has the least element $a\in\RR$, then $x-a\in S_K$.
		\item[(b)] If $K$ has the greatest element $b\in \RR$, then $b-x\in S_K$.
		\item[(c)] 
  For every $a,b\in K$, $a\neq b$, if $(a,b)\cap K=\emptyset$, then $(x-a)(x-b)\in S_K$.
		\item[(d)] These are the only elements of $S_K$.
	\end{enumerate}

\begin{remark}
The definition of the natural description coincides with the one given in \cite[2.3 \textit{Notes}.(2)]{KM02} only that we allow any closed set $K$, not necessarily a semialgebraic one, i.e., a union of finitely many closed intervals.\hfill$\vartriangle$
\end{remark}

For a closed set $K\subseteq \RR$ we write $\cI(K)$ to denote
the smallest closed interval containing $K$.
Note that $\cI(K)\setminus K$ is of the form 
     $$\cI(K)\setminus K=\cup_{i\in \Gamma}(a_i,b_i),$$
   where $\{(a_i,b_i)\colon i\in \Gamma\}$ is a family of
   pairwise disjoint bounded intervals and the index set $\Gamma$
   is at most countable.
   For $J\subseteq \Gamma$ we define the set 
   $$K_J=\cI(K)\setminus \cup_{j\in J}(a_j,b_j)$$

\begin{proposition}
\label{prop:natural-desc}
     Let $K\subseteq \RR$ be a closed set
     and
     $\cI(K), a_i,b_i,\Gamma, K_J$ defined as above.
     Let $\Omega$ be a set of all finite subsets of $\Gamma$.
     The following statements hold:
     \begin{enumerate}
        \item  
        \label{prop:natural-desc-pt1}
        $\displaystyle\Pos_{\leq d} ({K})=
        \underset{J \in\Omega}\cup
        \Pos_{\leq d} (K_J).$
        \item
        \label{prop:natural-desc-pt2}
            $\displaystyle T_{S_K}^{(d)}=\underset{J \in \Omega}{\cup}T^{(d)}_{S_{K_J}}.$
        \item 
        \label{prop:natural-desc-pt3}
        $\Pos_{\leq d}(K)=T^{(d)}_{S_K}$.
     \end{enumerate}
 \end{proposition}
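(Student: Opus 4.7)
The three items are best tackled in order, with (3) assembled from (1), (2), and the known saturation theorem for a finite union of closed intervals.

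For \eqref{prop:natural-desc-pt1}, the inclusion $\bigcup_{J\in \Omega}\Pos_{\leq d}(K_J)\subseteq \Pos_{\leq d}(K)$ is immediate from $K\subseteq K_J$. For the reverse inclusion, take a nonzero $f\in \Pos_{\leq d}(K)$ and set
$$I_f:=\{i\in\Gamma\colon f(x)<0 \text{ for some }x\in(a_i,b_i)\}.$$
The crucial step is to bound $|I_f|\leq d-1$, so that $J:=I_f\in\Omega$. Since $K$ is closed and each $(a_i,b_i)$ is a bounded connected component of $\cI(K)\setminus K$, the endpoints $a_i,b_i$ lie in $K$, hence $f(a_i),f(b_i)\geq 0$. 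Therefore, for every $i\in I_f$, the minimum of $f$ on the compact interval $[a_i,b_i]$ is negative and must be attained at an interior critical point $c_i\in(a_i,b_i)$. The points $c_i$ are pairwise distinct because the intervals $(a_i,b_i)$ are pairwise disjoint, and since $\deg f'\leq d-1$, there are at most $d-1$ such critical points. With $J=I_f$ one has $K_J=K\cup\bigcup_{i\notin J}(a_i,b_i)$, and on $\bigcup_{i\notin J}(a_i,b_i)$ the polynomial $f$ is $\geq 0$ by the definition of $I_f$, giving $f\in \Pos_{\leq d}(K_J)$.

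For \eqref{prop:natural-desc-pt2}, I first observe that $\min K=\min K_J$ and $\max K=\max K_J$ whenever these exist, so the endpoint generators in $S_K$ and $S_{K_J}$ coincide; the only possible difference is which quadratic gap-generators $(x-a_i)(x-b_i)$ appear. The inclusion $\supseteq$ then follows from $S_{K_J}\subseteq S_K$ for every finite $J$. For $\subseteq$, any $\sum_{s\in S_K^\pi}\sigma_s s\in T^{(d)}_{S_K}$ has only finitely many nonzero summands, and each $s\in S_K^\pi$ is a finite product of generators, so only finitely many quadratic generators $(x-a_i)(x-b_i)$ occur. Collecting their indices into a finite set $J\subseteq\Gamma$ places the element in $T^{(d)}_{S_{K_J}}$.

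For \eqref{prop:natural-desc-pt3}, combining \eqref{prop:natural-desc-pt1} and \eqref{prop:natural-desc-pt2} reduces the claim to
$$\Pos_{\leq d}(K_J)=T^{(d)}_{S_{K_J}}\quad\text{for every finite }J\in\Omega.$$
For such $J$, the set $K_J$ is a finite union of closed (possibly unbounded) intervals whose natural description is $S_{K_J}$, and the above equality is precisely the classical Kuhlmann--Marshall saturation theorem \cite{KM02}. The main obstacle is the finiteness step in \eqref{prop:natural-desc-pt1} (which requires carefully tracking that $a_i,b_i\in K$ for every $i\in\Gamma$, covering the possibly unbounded case), together with checking that the version of the saturation result cited for \eqref{prop:natural-desc-pt3} indeed handles the unbounded intervals appearing in $K_J$.
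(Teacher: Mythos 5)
Your proposal is correct and follows essentially the same route as the paper: reduce to a finite subunion $K_J$ by bounding the number of gaps where $f$ can go negative, observe $S_{K_J}\subseteq S_K$ and that elements of $T^{(d)}_{S_K}$ involve only finitely many gap generators, and invoke the saturation theorem for finite unions of closed intervals. Your argument for part \eqref{prop:natural-desc-pt1} is in fact slightly cleaner than the paper's: the paper asserts that a polynomial that is negative somewhere in a gap must vanish in the \emph{open} interval $(a_i,b_i)$, which fails if $f$ only vanishes at a gap endpoint (e.g.\ $f(x)=x(x-1)$ on $K=(-\infty,0]\cup[1,\infty)$); your version, locating a negative interior critical point $c_i\in(a_i,b_i)$ of $f$ and counting roots of $f'$, avoids this issue and yields the sharper bound $|I_f|\leq d-1$. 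One small correction: for part \eqref{prop:natural-desc-pt3} the saturation result you need is the \emph{truncated} one, $T^{(d)}_{S_{K_J}}=\Pos_{\leq d}(K_J)$, which is \cite[Theorem 4.1]{KMS05} (the reference the paper uses), not \cite{KM02}, which concerns the untruncated preordering; the former does handle the unbounded intervals that may appear in $K_J$, so your flagged concern is resolved.
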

 
 \begin{proof}
First we prove \eqref{prop:natural-desc-pt1}.
Since $K\subseteq K_J$ for every $J\in \Omega$, the inclusion $(\supseteq)$ is trivial. To prove the  inclusion $(\subseteq)$
take $p\in \Pos_{\leq d}(K)$.
Note that $p$ is nonnegative on all but at most finitely many intervals 
   $(a_{i_1},b_{i_1}),\ldots, (a_{i_j},b_{i_j})$,
   where $i_\ell\in \Gamma$ for each $i_\ell$. This follows from the observation that if $p$ is negative in a point from $(a_i,b_i)$,
   then it should have a zero on $(a_i,b_i)$ in order to be nonnegative on $K$. Since the degree of $p$ is at most $d$,
   there are at most $d$ disjoint intervals $(a_i,b_i)$, where
   $p$ could have a zero. But then $p\in  \Pos_{\leq d} (K_{J})$
   for $J=\{i_1,\ldots,i_j\}$.
   
\eqref{prop:natural-desc-pt2} follows by noticing that 
$S_K^{\pi}=\underset{J \in \Omega} \cup S_{K_J}^{\pi}$. 

It remains to prove \eqref{prop:natural-desc-pt3}.
By \cite[Theorem 4.1]{KMS05}, we have that
$T^{(d)}_{S_{K_J}}=\Pos_{\leq d}(K_J)$ for every $J\in \Omega$.
This fact, together with \eqref{prop:natural-desc-pt1} and \eqref{prop:natural-desc-pt2}, implies \eqref{prop:natural-desc-pt3}.
 \end{proof}
 

\subsection{Localizing Hankel matrices}
\label{Hankel-mat-ext}

Let $\gamma\equiv\gamma^{(2k)}=(\gamma_{0},\gamma_{1},\ldots,\gamma_{2k})\in \RR^{2k+1}$
be a sequence.
For $\ell\in \NN$, $\ell\leq k$,
the Hankel matrix $\cH_{1,\gamma^{(2\ell)}}$ of size $(\ell+1)\times (\ell+1)$,
with columns and rows indexed by the monomials $\textit{1},X,\ldots,X^\ell$,
is equal to	
\begin{equation}\label{vector-v}
		\cH_{1,\gamma^{(2\ell)}}:=\left(\gamma_{i+j} \right)_{i,j=0}^\ell
					=	\kbordermatrix{
							& \textit{1} & X & X^2 & \cdots  & X^{\ell} \\
							\textit{1} & \gamma_0 & \gamma_1 & \gamma_2 & \cdots &\gamma_\ell\\
							X & \gamma_1 & \gamma_2 & \iddots & \iddots & \gamma_{\ell+1}\\
							X^2 & \gamma_2 & \iddots & \iddots & \iddots & \vdots\\
							\vdots &\vdots 	& \iddots & \iddots & \iddots & \gamma_{2\ell-1}\\
							X^\ell & \gamma_\ell & \gamma_{\ell+1} & \cdots & \gamma_{2\ell-1} & \gamma_{2\ell}
						}.
	\end{equation}
\textbf{Convention:} If $\gamma\equiv\gamma^{(2\ell+1)}=(\gamma_{0},\gamma_{1},\ldots,\gamma_{2\ell+1})\in \RR^{2\ell+2}$
is of even length, then we define 
$\cH_{1,\gamma}:=\cH_{1,\gamma^{(2\ell)}}$, i.e., we omit the last coordinate.\\

For $p(x)=\sum_{i=0}^{\ell'} a_{i}x^i\in \RR[x]$, $\ell'\leq \ell$, 
we define the \textbf{evaluation} $p(X)$ on the columns of the matrix $\cH_{1,\gamma^{(2\ell)}}$ by replacing each capitalized monomial $X^i$ by the column of $\cH_{1,\gamma^{(2\ell)}}$, indexed by this monomial.
Then $p(X)$ is a vector from the linear span of the columns of $\cH_{1,\gamma^{(2\ell)}}$. 
If this vector is the zero one,
then we say $p$ is a \textbf{column relation} of $\cH_{1,\gamma^{(2\ell)}}$.

Let $\cH_{1,\gamma^{(2k)}}$ be psd and singular. Let $\ell\in\NN$ be the smallest number such
that $\cH_{1,\gamma^{(2\ell)}}$ is singular. Then the only column relation of $\cH_{1,\gamma^{(2\ell)}}$ 
is of the form
$X^\ell=a_0\textit{1}+a_1X+\ldots+a_{\ell-1}X^{\ell-1}$ for some $a_i\in \RR$.
The polynomial
$$p_\gamma(x)=x^\ell-\sum_{i=0}^{\ell-1}a_ix^i\in\RR[x]_{\leq \ell}$$
is called
the \textbf{generating polynomial} of $\gamma$. 
Let 
$$
q_{i,\gamma}(x):=x^{i}\cdot p_\gamma(x), \;i\in \NN.$$
By \cite[Theorem 2.4]{CF91}, all polynomials 
$q_{1,\gamma},q_{2,\gamma},\ldots, q_{k-\deg p_\gamma-1,\gamma}$ are column relations of 
$\cH_{1,\gamma^{(2k)}}$,
while $q_{k-\deg p_\gamma,\gamma}$ being a column relation or not determines the existence of a $\RR$--rm for $\gamma$ (see Theorem \ref{Hamburger} below).

The \textbf{Riesz functional} $L_\gamma:\RR[x]_{\leq 2k}\to \RR$ of $\gamma$ is defined by $L_\gamma(x^i):=\gamma_i$ for each $i$.
Let $f\in \RR[x]_{\leq 2k}$. An \textbf{$f$--localizing Hankel matrix $\cH_{f,\gamma}$ of $\gamma$} is a real square matrix of size 
$s(k,f)\times s(k,f)$, where 
    $s(k,f)=k+1-\lceil\frac{\deg f}{2}\rceil$,
with the $(i,j)$--th entry equal to $L_\gamma(fx^{i+j-2})$.
We write
$$f\cdot \gamma:=(\gamma^{(f)}_0,\gamma^{(f)}_1,\ldots,
\gamma^{(f)}_{2k-\deg f}),
\quad
\gamma^{(f)}_i:=L_\gamma(fx^i).$$
Note that $\cH_{f,\gamma}=\cH_{1,f\cdot\gamma}$. We denote the Riesz functional of $f\cdot \gamma$ by $L_{f,\gamma}$ and call it an \textbf{$f$--localizing Riesz functional of $\gamma$}.

For a functional $L:\RR[x]_{\leq 2k}\to \RR$ the notation $L_f$ stands for $L_{f,\gamma}$, where $\gamma$ is a sequence belonging to $L$, i.e., $\gamma_i:=L(x^i)$ for each $i$.

\begin{proposition}
The following statements are equivalent:
\begin{enumerate}
\item 
\label{part-1-prop3}
$L_{f,\gamma}$ is square--positive.
\item 
\label{part-2-prop3}
$L_{f,\gamma}(g)\geq 0$ for every $g\in \sum\RR[x]^2$ such that $\deg(g)\leq 2k-\deg{f}$.
\item 
\label{part-3-prop3}
$H_{f,\gamma}$ is positive semidefinite.
\end{enumerate}
\end{proposition}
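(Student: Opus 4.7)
The plan is to prove $(1)\Leftrightarrow(2)$ by a routine unpacking of definitions together with a parity observation, and then $(2)\Leftrightarrow(3)$ via the classical Hankel--matrix quadratic--form identity $v^\top \cH_{f,\gamma}\, v = L_{f,\gamma}(p_v^2)$.

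For $(1)\Leftrightarrow(2)$, observe that square--positivity of $L_{f,\gamma}\colon \RR[x]_{\leq 2k-\deg f}\to \RR$ asserts $L_{f,\gamma}(g)\geq 0$ for every $g\in \sum\RR[x]^2_{\leq s(k,f)-1}$, where the bound is
\[
s(k,f)-1=k-\lceil\deg f/2\rceil=\lfloor (2k-\deg f)/2\rfloor.
\]
Because a nonzero sum of squares in $\RR[x]$ always has even degree (its leading coefficient is a positive sum of squares of the leading coefficients of the maximal--degree summands), every $g=\sum_i p_i^2$ appearing in (2) satisfies $\deg g\leq 2\lfloor(2k-\deg f)/2\rfloor=2(s(k,f)-1)$, whence each $\deg p_i\leq s(k,f)-1$. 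Conversely, any $g\in \sum\RR[x]^2_{\leq s(k,f)-1}$ has $\deg g\leq 2(s(k,f)-1)\leq 2k-\deg f$. So the two test classes coincide, giving $(1)\Leftrightarrow(2)$.

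For $(2)\Leftrightarrow(3)$, I would associate to each $v=(v_1,\ldots,v_{s(k,f)})^\top\in\RR^{s(k,f)}$ the polynomial $p_v(x):=\sum_{i=1}^{s(k,f)} v_i x^{i-1}\in \RR[x]_{\leq s(k,f)-1}$ and use $(\cH_{f,\gamma})_{ij}=L_\gamma(fx^{i+j-2})$ together with bilinearity to compute
\[
v^\top \cH_{f,\gamma}\, v
    = \sum_{i,j=1}^{s(k,f)} v_iv_j\, L_\gamma(fx^{i+j-2})
    = L_\gamma(f\cdot p_v^2)
    = L_{f,\gamma}(p_v^2),
\]
where the last equality is the defining property $L_{f,\gamma}(h)=L_\gamma(fh)$ of the localizing functional. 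Under (2), the right--hand side is nonnegative for all $v$, so $\cH_{f,\gamma}\succeq 0$ and (3) follows. Conversely, under (3), write any $g\in \sum\RR[x]^2$ with $\deg g\leq 2k-\deg f$ as $g=\sum_i p_i^2$ with each $\deg p_i\leq s(k,f)-1$ (using the parity argument), denote by $v^{(i)}\in \RR^{s(k,f)}$ the coefficient vector of $p_i$, and sum the identity to obtain $L_{f,\gamma}(g)=\sum_i (v^{(i)})^\top \cH_{f,\gamma}\, v^{(i)}\geq 0$, proving (2).

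There is no genuine obstacle: the argument is entirely standard. The only mildly delicate point is the degree bookkeeping that reconciles the definition of $s(k,f)$ with the convention $\cH_{1,\gamma^{(2\ell+1)}}:=\cH_{1,\gamma^{(2\ell)}}$ in the case of odd $\deg f$, and this is handled once and for all by the parity observation in the second paragraph.
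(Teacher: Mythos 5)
Your proposal is correct and follows essentially the same route as the paper: the paper dismisses $(1)\Leftrightarrow(2)$ as clear and invokes the quadratic--form identity $L_{f,\gamma}(g^2)=(\hat g)^T\cH_{f,\gamma}\hat g$ for the remaining equivalence, which is exactly the content of your two paragraphs. You merely spell out the degree/parity bookkeeping that the paper leaves implicit, and you route the chain as $(1)\Leftrightarrow(2)$ then $(2)\Leftrightarrow(3)$ rather than $(1)\Leftrightarrow(2)$ and $(1)\Leftrightarrow(3)$; both choices rest on the identical ingredients.
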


\begin{proof}
The equivalence $\eqref{part-1-prop3}\Leftrightarrow\eqref{part-2-prop3}$
is clear.
The equivalence $\eqref{part-1-prop3}\Leftrightarrow\eqref{part-3-prop3}$
follows by the equality
$L_{f,\gamma}(g^2)=(\hat g)^T\cH_{f,\gamma} \hat g$,
where $\hat g$ is a vector of coefficients of $g$ in the order $1,x,\ldots,x^{k-\lceil\frac{\deg f}{2}\rceil}$.
\end{proof}

\subsection{Solution to the $\RR$--TMP}
Let $x_1,\ldots, x_{r}\in \RR$. We denote by $V_{(x_1,\ldots,x_r)}:=(x_j^{i-1})_{i,j}\in \RR^{r\times r}$ the Vandermondo matrix.
The following is a solution to the $\RR$--TMP.

\begin{theorem}[{\cite[Theorems 3.9 and 3.10]{CF91}}]\label{Hamburger}
	Let $k\in \NN$, $\gamma=(\gamma_0,\ldots,\gamma_{2k})\in \RR^{2k+1}$ with $\gamma_0>0$ and $L_\gamma:\RR[x]_{\leq 2k}\to \RR$ the Riesz functional of $\gamma$.
	The following statements are equivalent:
\begin{enumerate}
	\item
		\label{pt1-130222-1851} 
		$\cM_{L_\gamma,\RR}\neq \emptyset$.
        \item 
            \label{extension}
                There exist  $\gamma_{2k+1}, \gamma_{2k+2}\in \RR$
                such that $\cH_{1,\gamma^{(2k+2)}}$
                is positive semidefinite.
	\item\label{pt4-v2206} 
	One of the following statements holds:
	\begin{enumerate}
		\item $\cH_{1,\gamma^{(2k)}}$ is positive definite.
		\item $\cH_{1,\gamma^{(2k)}}$ is positive semidefinite 
        and if $p_\gamma$ is the generating polynomial of $\gamma$,
        then the polynomial
        $x^{k-\deg p_\gamma}\cdot p_\gamma(x)$ is a column relation of $\cH_{1,\gamma^{(2k)}}$.
            \item 
        $L_\gamma$ is square--positive and if $0\neq p\in \RR[x]$ is a polynomial of the lowest degree such that
        $p^2\in \ker L_\gamma$, then 
        $x^{2k-2\deg p}\cdot p^2\in \ker L_\gamma$.
	 \end{enumerate}
\end{enumerate}
Moreover, if $\cM_{L_\gamma,\RR}\neq \emptyset$, then:
\begin{enumerate}[label=(\roman*)]
	\item\label{140222-1158}
        If $\cH_{1,\gamma^{(2k)}}$
        is singular, 
        then 
        $\cM_{L_\gamma,\RR}=\big\{\sum_{i=1}^{r}\rho_i\delta_{x_i}\big\}$, where $x_1,\ldots,x_r$ are the roots of $p_\gamma$ 
	and
	$
	(\rho_i)_{i=0}^r=	
	V_{(x_1,\ldots,x_r)}^{-1}
        (\gamma_i)_{i=0}^{r-1}.
	$
        In this case there are unique $\gamma_{2k+1}, \gamma_{2k+2}\in \RR$
                such that $\cH_{1,\gamma^{(2k+2)}}$
                is positive semidefinite.
	\item\label{140222-1202} 
        If $\cH_{1,\gamma^{(2k)}}$ is invertible, then there are infinitely many $(k+1)$--atomic measures in 
        $\cM_{L_\gamma,\RR}$.
        They are obtained by choosing $\gamma_{2k+1}\in \RR$ arbitrarily, defining 
	$$\gamma_{2k+2}:=
        \big((\gamma_i)_{i=k+1}^{2k+1}\big)^T
        (\cH_{1,\gamma^{(2k)}})^{-1}
        (\gamma_i)_{i=k+1}^{2k+1}$$ 
		and using
		\ref{140222-1158} for 
		$\widetilde \gamma:=(\gamma_0,\ldots,\gamma_{2k+1},\gamma_{2k+2})\in \RR^{2k+3}.$ 
\end{enumerate}
\end{theorem}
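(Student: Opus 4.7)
The plan is to establish the chain $(1) \Rightarrow (2) \Rightarrow (3) \Rightarrow (1)$, treating the three subcases of (3) simultaneously, and then extract the descriptions (i), (ii) from the construction in $(3) \Rightarrow (1)$. The implication $(1) \Rightarrow (2)$ is the easy one: any representing measure for $\gamma^{(2k)}$ may be chosen finitely atomic (this is classical or can be obtained from an approximation argument), so the integrals $\gamma_{2k+1} := \int x^{2k+1}\dd\mu$ and $\gamma_{2k+2} := \int x^{2k+2}\dd\mu$ are well-defined, and $\cH_{1,\gamma^{(2k+2)}}$ arises as the Gram matrix of the monomials $1, X, \dots, X^{k+1}$ with respect to the inner product $\langle p, q\rangle := \int pq\, \dd\mu$, hence is psd.

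For $(2) \Rightarrow (3)$, observe first that $(3c)$ is equivalent to $(3a)\vee(3b)$: square-positivity of $L_\gamma$ is exactly psd-ness of $\cH_{1,\gamma^{(2k)}}$ (via $L_\gamma(g^2) = \hat g^T \cH_{1,\gamma^{(2k)}}\hat g$), and a nonzero kernel element $p$ of smallest degree corresponds to the generating polynomial $p_\gamma$. So it suffices to derive $(3a)\vee(3b)$ from (2). If $\cH_{1,\gamma^{(2k)}}$ is pd we are in case (a). Otherwise, let $\ell$ be minimal with $\cH_{1,\gamma^{(2\ell)}}$ singular; then the generating polynomial $p_\gamma$ of degree $\ell$ exists, and I would invoke column-relation propagation (\cite[Theorem 2.4]{CF91}, already cited in the excerpt) applied to the psd extension $\cH_{1,\gamma^{(2k+2)}}$ to propagate $p_\gamma(X) = 0$ to $x^j p_\gamma(X) = 0$ for all $j \le k - \ell$, giving case (b).

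For $(3) \Rightarrow (1)$ I would argue constructively, which simultaneously yields (i), (ii). In case (b), one first shows that the generating polynomial $p_\gamma$ of a psd singular Hankel matrix has $\ell = \deg p_\gamma$ distinct real roots $x_1, \ldots, x_\ell$; this is a standard consequence of the fact that $\cH_{1,\gamma^{(2(\ell-1))}}$ is pd, so no nonzero polynomial of degree $< \ell$ can lie in $\ker L_\gamma$ after squaring, which forces simplicity and reality of roots (an argument via Sylvester's inertia or a direct contradiction if $p_\gamma$ had a complex or repeated root). One then solves the Vandermonde system $V_{(x_1,\dots,x_\ell)}(\rho_i)_{i=1}^\ell = (\gamma_i)_{i=0}^{\ell-1}$ for densities $\rho_i$, and verifies positivity of the $\rho_i$ by a small Hankel argument and checks that $\mu = \sum \rho_i \delta_{x_i}$ reproduces all of $\gamma_0, \dots, \gamma_{2k}$ using the column relations $x^j p_\gamma(X) = 0$ for $j = 0, \dots, k - \ell$. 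This gives (i), together with uniqueness of $\gamma_{2k+1}, \gamma_{2k+2}$, since $x^{k-\ell+1}p_\gamma \in \ker L_\gamma$ and $x^{k-\ell+2}p_\gamma \in \ker L_\gamma$ pin them down. Case (a) reduces to (b): pick $\gamma_{2k+1} \in \RR$ arbitrarily, then choose $\gamma_{2k+2}$ by the Schur complement formula from (ii) so that $\cH_{1,\gamma^{(2k+2)}}$ has rank $k+1$; now (b) applies to $\widetilde\gamma$, producing a $(k+1)$-atomic measure, and distinct choices of $\gamma_{2k+1}$ yield distinct measures. The main obstacle is the real-simple-roots claim for $p_\gamma$ together with positivity of the Vandermonde solution; both are structural facts about psd Hankel matrices whose proof requires a careful but standard application of the minimality of $\ell$ and the evaluation identity for $L_\gamma$.
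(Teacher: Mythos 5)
The paper does not prove Theorem~\ref{Hamburger}; it is quoted verbatim from \cite[Theorems~3.9 and~3.10]{CF91}, so there is no in-paper argument to compare your attempt against. Your outline follows the standard Curto--Fialkow route correctly: finite atomicity of a representing measure for $(1)\Rightarrow(2)$; the observation that $(3c)\Leftrightarrow(3a)\vee(3b)$, since for a psd Hankel matrix $L_\gamma\big((x^{k-\ell}p_\gamma)^2\big)=0$ iff $x^{k-\ell}p_\gamma(X)$ is a column relation (with $\ell:=\deg p_\gamma$); column-relation propagation from the psd extension $\cH_{1,\gamma^{(2k+2)}}$ for $(2)\Rightarrow(3)$; and the constructive generating-polynomial argument for $(3)\Rightarrow(1)$ together with the moreover parts.

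One wrinkle worth fixing: in case (b), the order of operations matters. You must first show that the Vandermonde-defined densities $\rho_i$ reproduce \emph{all} the moments $\gamma_0,\dots,\gamma_{2k}$, using the column relations $x^j p_\gamma(X)=0$, $j=0,\dots,k-\ell$, as recursion formulas for $\gamma_\ell,\dots,\gamma_{2k}$. Only after that can you evaluate $L_\gamma(q_j^2)=\rho_j$ for the Lagrange interpolants $q_j$ at the roots $x_1,\dots,x_\ell$ and deduce $\rho_j>0$ from $\cH_{1,\gamma^{(2(\ell-1))}}\succ 0$. As you phrase it (``verifies positivity of the $\rho_i$ \dots\ and checks that $\mu$ reproduces all of $\gamma_0,\dots,\gamma_{2k}$''), these read as independent steps, but the second is a prerequisite for the first. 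This is a small ordering issue and does not affect the overall soundness of the approach.
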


The following result characterizes the existence of a $(k+1)$--atomic measure for a sequence $\gamma\in \RR^{2k+1}$ with $\cH_{1,\gamma}\succ 0$, having one prescribed atom in the support. 

\begin{theorem}[{\cite[Theorem 4]{BKRSV20}}]
	\label{230422-1853}
	Let $k\in \NN$ and $\gamma=(\gamma_0,\ldots,\gamma_{2k})\in\RR^{2k+1}$ be a sequence such that 
	$\cH_{1,\gamma}$ is positive definite.
	The following statements are equivalent:
	\begin{enumerate}
	\item 
	\label{230422-1853-pt1}
		There exists a $(k+1)$--atomic $\RR$--representing measure for $\gamma$ with one of the
		atoms equal to $x_1$.
	\item 
	\label{230422-1853-pt2}
		$x_1\cH_{1,\gamma^{(2k-2)}}-\cH_{x,\gamma}$ is invertible.
	\end{enumerate}

	Moreover, if the equivalent statements 
		\eqref{230422-1853-pt1} and \eqref{230422-1853-pt2} 
	hold, then the other $k$ atoms except $x_1$ are precisely the solutions to
        $g(\mbf{x})=0$,
	where
		$$
		g(\mbf{x}):=\det G(x_1,\mbf{x}),
		\quad
		G(\mbf{x}_1,\mbf{x}_2)=
        \mbf{x}_1\mbf{x}_2\cH_{1,\gamma^{(2k-2)}} 
        -(\mbf{x}_1+\mbf{x}_2) \cH_{x,\gamma}
        + \cH_{x^2,\gamma}.
		$$
\end{theorem}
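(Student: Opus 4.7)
The plan is to view both matrices in the statement as localizing Hankel matrices of $\gamma$ and to exploit the resulting integral representations against any representing measure. A direct index check yields
$$
G(x_1,\mathbf{x})=\cH_{(x-x_1)(x-\mathbf{x}),\gamma}
\qquad\text{and}\qquad
x_1\cH_{1,\gamma^{(2k-2)}}-\cH_{x,\gamma}=-\cH_{x-x_1,\gamma},
$$
both symmetric $k\times k$ matrices; consequently, for any $\RR$-rm $\mu$ of $\gamma$,
$$
G(x_1,\mathbf{x})=\int(x-x_1)(x-\mathbf{x})\,\mathbf{v}(x)\mathbf{v}(x)^T\dd\mu(x),
\quad
\cH_{x-x_1,\gamma}=\int(x-x_1)\,\mathbf{v}(x)\mathbf{v}(x)^T\dd\mu(x),
$$
with $\mathbf{v}(x):=(1,x,\ldots,x^{k-1})^T$. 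In particular $G(x_1,\mathbf{x})$ is affine in $\mathbf{x}$ with linear coefficient matrix $-\cH_{x-x_1,\gamma}$, so $g(\mathbf{x})$ is a polynomial of degree at most $k$ with leading coefficient $(-1)^k\det\cH_{x-x_1,\gamma}$.

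For $\eqref{230422-1853-pt1}\Rightarrow\eqref{230422-1853-pt2}$ together with the identification of the other atoms, I would write $\mu=\sum_{j=1}^{k+1}\rho_j\delta_{y_j}$ with $y_1=x_1$. Substituting into the integral representation of $\cH_{x-x_1,\gamma}$ gives $\cH_{x-x_1,\gamma}=VDV^T$, where $V:=[\mathbf{v}(y_2)\mid\cdots\mid\mathbf{v}(y_{k+1})]\in\RR^{k\times k}$ is a square Vandermonde matrix (invertible since the $y_j$ are distinct) and $D:=\operatorname{diag}(\rho_j(y_j-x_1))_{j=2}^{k+1}$ is invertible ($\rho_j>0$, $y_j\neq x_1$), proving \eqref{230422-1853-pt2}. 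Plugging $\mathbf{x}=y_\ell$ ($\ell\geq 2$) into the integral representation of $G(x_1,\mathbf{x})$ kills the $j=1$ and $j=\ell$ terms, leaving a sum of $k-1$ rank-one matrices in $\RR^{k\times k}$, hence singular; thus $g(y_\ell)=0$. By \eqref{230422-1853-pt2} $g$ has degree exactly $k$, so the distinct values $y_2,\ldots,y_{k+1}$ exhaust its zeros.

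For $\eqref{230422-1853-pt2}\Rightarrow\eqref{230422-1853-pt1}$, my plan is to leverage Theorem \ref{Hamburger}\ref{140222-1202}: the $(k+1)$-atomic $\RR$-rms of $\gamma$ are parametrized by $\gamma_{2k+1}\in\RR$, and each is supported on the roots of the generating polynomial $p_{\widetilde\gamma}$ of the resulting rank-$(k+1)$ flat extension $\widetilde\gamma$. Writing $\mathbf{a}=\cH_{1,\gamma^{(2k)}}^{-1}(\gamma_{k+1},\ldots,\gamma_{2k+1})^T$ and $\mathbf{v}_{k+1}(x)=(1,x,\ldots,x^k)^T$, one has $p_{\widetilde\gamma}(x_1)=x_1^{k+1}-\mathbf{v}_{k+1}(x_1)^T\mathbf{a}$, affine in $\gamma_{2k+1}$ with slope $-[\cH_{1,\gamma^{(2k)}}^{-1}\mathbf{v}_{k+1}(x_1)]_{k+1}$. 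The key calculation is to apply Cramer's rule together with the row reduction $R_i\mapsto R_i-x_1R_{i-1}$ ($i=k,\ldots,1$) to show that this slope equals $(-1)^{k+1}\det\cH_{x-x_1,\gamma}/\det\cH_{1,\gamma^{(2k)}}$, which is nonzero precisely under \eqref{230422-1853-pt2}. Solving the affine equation then uniquely determines $\gamma_{2k+1}$ so that $x_1$ is a root of $p_{\widetilde\gamma}$, hence an atom of the associated $(k+1)$-atomic rm. The main obstacle is this determinantal identity linking the slope to $\det\cH_{x-x_1,\gamma}$; once it is established, Theorem \ref{Hamburger}\ref{140222-1202} delivers the remainder.
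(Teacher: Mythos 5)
Your proof is correct, and it takes a genuinely different route from the paper's (the paper actually just cites \cite[Theorem 4]{BKRSV20}, but its LaTeX source contains a commented-out appendix proof, which is what I'll compare against).

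The paper's proof is built around companion matrices: from a column relation $p(x)=(x-x_1)q(x)$ of the extended Hankel matrix it derives the factorization $x_1\cH_{x,\gamma}-\cH_{x^2,\gamma}=(x_1\cH_{1,\gamma^{(2k-2)}}-\cH_{x,\gamma})C_{q}$, then proves invertibility of $x_1\cH_{1,\gamma^{(2k-2)}}-\cH_{x,\gamma}$ by identifying $x_1^2\cH_{1,\gamma^{(2k-2)}}-2x_1\cH_{x,\gamma}+\cH_{x^2,\gamma}=B\cH_{1,\gamma}B^T$ for a band matrix $B$ with trivial kernel; the ``moreover'' part falls out because $G(x_1,\mathbf{x}_2)=(x_1\cH_{1,\gamma^{(2k-2)}}-\cH_{x,\gamma})(\mathbf{x}_2 I-C_q)$, so zeros of $g$ are eigenvalues of $C_q$. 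Your route is more direct. For $\eqref{230422-1853-pt1}\Rightarrow\eqref{230422-1853-pt2}$ and the atom identification, the identifications $x_1\cH_{1,\gamma^{(2k-2)}}-\cH_{x,\gamma}=-\cH_{x-x_1,\gamma}$ and $G(x_1,\mathbf{x})=\cH_{(x-x_1)(x-\mathbf{x}),\gamma}$ are correct (both $k\times k$, and checking the $(i,j)$ entry confirms them), and the Vandermonde factorization $\cH_{x-x_1,\gamma}=VDV^T$ is cleaner than the paper's $B\cH_{1,\gamma}B^T$ argument; the rank count $\mathrm{rank}\,G(x_1,y_\ell)\leq k-1$ and the observation that $g$ has degree exactly $k$ with leading coefficient $(-1)^k\det\cH_{x-x_1,\gamma}\ne 0$ neatly pins down the atom set. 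For $\eqref{230422-1853-pt2}\Rightarrow\eqref{230422-1853-pt1}$, the parametrization via Theorem~\ref{Hamburger}\ref{140222-1202} and the affine dependence of $p_{\widetilde\gamma}(x_1)$ on $\gamma_{2k+1}$ is sound, and the determinantal identity you flag as the ``main obstacle'' actually works exactly as you sketch: by Cramer's rule the slope is $-\det M/\det\cH_{1,\gamma^{(2k)}}$, where $M$ is $\cH_{1,\gamma^{(2k)}}$ with last column replaced by $\mathbf{v}_{k+1}(x_1)$; applying $R_i\mapsto R_i-x_1R_{i-1}$ (rows indexed $0,\dots,k$, $i=k,\dots,1$) makes the last column $(1,0,\dots,0)^T$ and the remaining $k\times k$ block precisely $\cH_{x-x_1,\gamma}$, so $\det M=(-1)^k\det\cH_{x-x_1,\gamma}$, giving slope $(-1)^{k+1}\det\cH_{x-x_1,\gamma}/\det\cH_{1,\gamma^{(2k)}}$, nonzero iff \eqref{230422-1853-pt2} holds. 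Where the paper's proof explicitly constructs the flat extension $(\gamma_{2k+1},\gamma_{2k+2})$ from coefficients $\lambda_i$, you invoke the existing parametrization of flat extensions and solve one linear equation in $\gamma_{2k+1}$; the two are equivalent but your version separates cleanly the linear-algebra content (the one determinantal identity) from the moment-theoretic scaffolding.
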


\section{Solution to the $K$--RTMP}  
\label{sec:main-results}

Let $K\subseteq \RR$ be a closed set, $\Gamma\subseteq \RR$ at most countable closed set
and $L:\RR[x]_{\leq 2k}\to \RR$ a linear functional. In this section we characterize when $L$ has a $K$--rm vanishing on $\Lambda$, i.e., $\cM_{L,K,\Lambda}\neq \emptyset$. By Proposition \ref{cL-and-L-measures}, this in particular solves the $K$--RTMP for $\cL$ on $\cR^{(2k)}$, defined by \eqref{def-of-R}.
The case of nonsingular $L$ is covered by Theorem \ref{thm:nonsingular}, while the case of singular $L$ by Theorem \ref{compact-TRMP-extended}. Example \ref{counterexample} shows that the solution \cite[Proposition 2]{Cha94} for a compact set $K$ is missing additional conditions from Theorem \ref{compact-TRMP-extended} other than $K$--positivity of $L$.\\

We use $\partial K$ and $\Int(K)$ 
to denote the topological boundary and the interior of the set $K$, respectively.
Let $\iso(K)$ be the set of isolated points of $K$.
We use $\Card(V)$ to denote the cardinality of the set $V$. A closed set $K\subseteq \RR$ is \textbf{semialgebraic}, if it is of the form
$$
K:=\left\{x\in \RR\colon p_1(x)\geq 0,\ldots,p_m(x)\geq 0\right\}
$$
for some $p_1,\ldots,p_m\in \RR[x]$.\\

By Proposition \ref{cL-and-L-measures}, solving $K$--RTMP, defined in \S\ref{statement-K-RTMP}, is equivalent to solving the $K$--TMP for $L$, defined by 
\eqref{Riesz-functional}, where the measure must vanish on all real poles. In this section we solve the $K$--TMP for $L$, where the measure has to vanish on a given closed set, which is at most countable. \\

The following is the solution to the nonsingular case of the $K$--TMP for $L$.

\begin{theorem}[Nonsingular case]
\label{thm:nonsingular}
Let $K\subseteq \RR$ be a closed set and $\Lambda\subset \RR$ be a finite or a countable closed set
such that $\Lambda\cap \iso(K)=\emptyset$.
Let $L:\RR[x]_{\leq 2k}\to \RR$ be a linear functional
and 
$L|_{T_{S_K}^{(2k)}\setminus\{0\}}>0$, where $S_K$ is the natural description of $K$.
Then there exist a $r$--atomic measure $\mu\in \cM^{(\fa)}_{L,K,\Lambda}$  
with $k+1\leq r\leq 2k+1$.

Moreover, assume that $K$ is closed and semialgebraic and write 
$\Card(\partial K)=2\ell_1+\ell_2$, $\ell_1\in \NN\cup \{0\}$, $\ell_2\in \{0,1\}$.
Then $r$ is at most:
\begin{enumerate}[label=\roman*)]
\item
\label{thm:nonsingular-part1}
    $k+1$, if $K\in \left\{\RR,[a,\infty),(-\infty,a]\right\}$ for some 
    $a\in \RR$.
    \smallskip
\item
\label{thm:nonsingular-part2} 
    $k+\ell_1+1$, if
    $K$ is bounded and has a non-empty interior.
    \smallskip
\item
\label{thm:nonsingular-part3} 
    $k+\ell_1+\ell_2+1$, if
    $K$ is bounded only from one side. 
    \smallskip
\item
\label{thm:nonsingular-part4}   
    $k+\ell_1+2$, if
    $K$ is unbounded from both sides.
    \smallskip
\end{enumerate}
\end{theorem}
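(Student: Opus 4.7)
My plan is to prove the lower bound via positive definiteness of the Hankel matrix, the general upper bound $2k+1$ via the di Dio--Schm\"udgen theorem applied to a locally compact subspace, and the sharper semialgebraic bounds by a constructive argument using the Blekherman et al.\ characterization of minimal quadratures together with Hamburger-type extensions.

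First, the hypothesis $L|_{T_{S_K}^{(2k)}\setminus\{0\}}>0$ combined with Proposition \ref{prop:natural-desc}\eqref{prop:natural-desc-pt3} (i.e., $\Pos_{\leq 2k}(K)=T_{S_K}^{(2k)}$) gives that $L$ is strictly $K$--positive. Applying this to squares yields $\cH_{1,\gamma^{(2k)}}\succ 0$, so every $K$--representing measure has at least $k+1$ atoms, which settles $r\geq k+1$. For the upper bound $r\leq 2k+1$, I would invoke \cite[Proposition 2 and Corollary 6]{DS18} on the locally compact Hausdorff space $X:=K\setminus\Lambda$. The key input is $\Pos_{\leq 2k}(X)=\Pos_{\leq 2k}(K)$, which follows because $\Lambda\cap\iso(K)=\emptyset$ makes $X$ dense in $K$, and continuity of polynomials propagates nonnegativity from $X$ to $K$. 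Hence $L$ is strictly positive on the nonnegative cone of the finite--dimensional subspace $\RR[x]_{\leq 2k}\subset C(X,\RR)$, and the cited results provide a finitely atomic rm supported on $X$ (hence in $\cM^{(\fa)}_{L,K,\Lambda}$) with at most $\dim\RR[x]_{\leq 2k}=2k+1$ atoms.

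For the semialgebraic refinements, I would argue case by case. In case \ref{thm:nonsingular-part1} with $K=\RR$, Theorem \ref{Hamburger}\ref{140222-1202} produces for each $\gamma_{2k+1}\in\RR$ a $(k+1)$--atomic rm whose atoms are the zeros of a generating polynomial depending algebraically on $\gamma_{2k+1}$. Since $\Lambda$ is at most countable and for each $\lambda\in\Lambda$ the set of $\gamma_{2k+1}$ forcing $\lambda$ to be an atom is a proper algebraic subset of $\RR$, all but countably many parameters yield atoms disjoint from $\Lambda$. For the half--line subcases $K=[a,\infty)$ or $K=(-\infty,a]$, strict $K$--positivity additionally forces positive definiteness of $\cH_{x-a,\gamma}$ or $\cH_{a-x,\gamma}$, and the Stieltjes analogue of Theorem \ref{Hamburger} yields a one--parameter family of $(k+1)$--atomic rms confined to the half--line, from which $\Lambda$ is again excluded generically.

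For cases \ref{thm:nonsingular-part2}--\ref{thm:nonsingular-part4} the plan is to invoke Theorem \ref{230422-1853} with atoms prescribed at suitable boundary points of $K$: each such prescription reduces the degrees of freedom by one but consumes one unit of the atom budget, so counting how many endpoints of the complementary intervals in $\cI(K)\setminus K$ must be prescribed to confine the remaining free atoms inside $K$ produces the stated bounds $k+\ell_1+1$, $k+\ell_1+\ell_2+1$ and $k+\ell_1+2$. The principal technical obstacle I expect is verifying, after each such prescription, that the matrix $x_1\cH_{1,\gamma^{(2k-2)}}-\cH_{x,\gamma}$ from Theorem \ref{230422-1853} remains invertible and that the resulting polynomial $g(\mathbf{x})$ has all of its $k$ roots simple, real, and inside $K$. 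This will require combining strict positivity of the various localizing Hankel matrices afforded by the preordering hypothesis with a delicate continuity argument showing that a residual one--parameter freedom of the extension can be moved to avoid the at most countable set $\Lambda$.
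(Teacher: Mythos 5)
Your treatment of the first part and case \ref{thm:nonsingular-part1} is essentially sound and matches the paper's strategy: the paper also reduces to $\Pos_{\leq 2k}(K)=\Pos_{\leq 2k}(K\setminus\Lambda)$, views $K\setminus\Lambda$ as a locally compact Hausdorff space, and applies \cite[Proposition 2 and Corollary 6]{DS18}; your separate derivation of the lower bound $r\geq k+1$ from $\cH_{1,\gamma}\succ 0$ is a harmless variant. For $K=\RR$ the perturbation-of-$\gamma_{2k+1}$ argument is the right idea, though the paper actually runs the same localizing-matrix machinery as in case \ref{thm:nonsingular-part2}.

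The genuine gap is in cases \ref{thm:nonsingular-part2}--\ref{thm:nonsingular-part4}. Your plan is to ``invoke Theorem \ref{230422-1853} with atoms prescribed at suitable boundary points of $K$'' and then count degrees of freedom. Two obstacles: first, Theorem \ref{230422-1853} prescribes exactly \emph{one} atom $x_1$ and characterizes the other $k$ as roots of $g(\mathbf{x})=\det G(x_1,\mathbf{x})$; it is a statement about $\RR$--representing measures, and nothing in it constrains the remaining roots to lie in $K$, nor does iterating it to prescribe $2\ell_1$ boundary atoms come for free. Second, you have not addressed the possibility $\Lambda\cap\partial K\neq\emptyset$ (which is allowed, since only $\Lambda\cap\iso(K)=\emptyset$ is assumed); prescribing an atom at a boundary point in $\Lambda$ would defeat the purpose. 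The paper resolves both issues differently: Claim~1 reduces to $\Lambda\cap\partial K=\emptyset$ by shrinking $K$ to a set $\widetilde K$ containing $\supp(\mu)$; then it assembles a joint linear matrix function $\cL(x,y)=\bigoplus_{f\in S^\pi_K}\cH_{f,(\gamma,x,y)}$ of localizing Hankel matrices, shows (Claim~2) that the feasible region has nonempty interior, picks an extension $(\widetilde x,y_{f,\widetilde x})$ making the localizing matrix for the top boundary product $f\in S^\pi_{K,\even}$ psd-singular while all others stay psd, invokes Tchakaloff's theorem \cite[Th\'eor\`eme II]{Tch57} to obtain a $K$--rm (this is the step that keeps the atoms inside $K$), and finally identifies $\supp(\mu)=\cZ(f)\cup\cZ(p_{f\cdot(\gamma,\widetilde x,y_{f,\widetilde x})})$, with the size count $\deg f + \deg p \leq 2\ell_1 + (k-\ell_1+1)$. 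Theorem \ref{230422-1853} is used only to discard the at most countably many extensions whose generating polynomial hits $\Lambda$, not to prescribe boundary atoms. Cases \ref{thm:nonsingular-part3} and \ref{thm:nonsingular-part4} are then reductions to \ref{thm:nonsingular-part2} by adding one or two artificial interior cut points $a,b\in\Int(K)\setminus(\Lambda\cup\supp(\mu))$, together with a nontrivial verification that $L$ remains strictly $\widetilde K$--positive for $\widetilde K=K\cap[a,b]$. Without the Tchakaloff step or an equivalent mechanism forcing the roots of the generating polynomial into $K$, your construction would not close.
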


\begin{proof}
Since
$\Lambda\cap \iso(K)=\emptyset$, it follows that 
$\Pos_{\leq 2k}(K)=\Pos_{\leq 2k}(K\setminus \Lambda)$. 
By Proposition \ref{prop:natural-desc},
the assumption $L|_{T_{S_K}^{(2k)}\setminus\{0\}}>0$ implies that
$L|_{\Pos_{\leq 2k}(K)\setminus\{0\}}>0$.
Since $\Lambda$ is closed, $K\setminus \Lambda$ is locally compact Hausdorff space. 
By \cite[Proposition 2 and Corollary 6]{DS18} (or \cite[Theorem 1.30]{Sch17}), $L$ has  a $r$--atomic $K$--rm $\mu$ such that $\mu(\Lambda)=0$ 
and
$r\in \{k+1,\ldots,2k+1\}$. This proves the first part of the theorem.\\

Let us prove the moreover part. 
Assume first that the assumptions of \ref{thm:nonsingular-part2}
hold, i.e., 
$K$ is bounded, closed, semialgebraic set with a non-empty interior. \\

\noindent \textbf{Claim 1:} It suffices to prove  \ref{thm:nonsingular-part2} under the assumption  $\Lambda\cap\partial K=\emptyset$.\\

\noindent \textit{Proof of Claim 1.}
Let $K_1:=\partial K\setminus \iso(K)$.
Since $\Pos_{\leq 2k}(K)=\Pos_{\leq 2k}(K\setminus K_1)$, 
there is $\mu\in \cM_{L,K,\Lambda\cup K_1}^{(\fa)}$ by the first part of the theorem,
in particular $\supp(\mu)\subseteq K\setminus (\Lambda \cup K_1)$.
Let $a:=\min(K)$, $b:=\max(K)$.
Note that 
$$K=[a_0,a_1]\cup [a_2,a_3]\cup \cdots \cup
[a_{2m-2},a_{2m-1}]\cup [a_{2m},a_{2m+1}],$$
where $m\in \NN\cup \{0\}$, $a_0:=a,$ $a_{2m+1}:=b$
and $a_{2i}\leq a_{2i+1}<a_{2i+2}$ for $i=0,\ldots,m-1$ and $a_{2m}\leq a_{2m+1}$.
We possibly shorten each interval $[a_{2i},a_{2i+1}]$ to $[\widetilde a_{2i},\widetilde a_{2i+1}]\subseteq [a_{2i},a_{2i+1}]$
such that
$$\widetilde K:=[\widetilde a_0,\widetilde a_1]\cup [\widetilde a_2,\widetilde a_3]\cup \cdots \cup
[\widetilde a_{2m-2},\widetilde a_{2m-1}]\cup [\widetilde a_{2m},\widetilde a_{2m+1}]$$
has the following properties:
\begin{enumerate}
    \item 
        \label{property-1}
        $\widetilde K$
        contains all atoms
        of $\mu$, 
    \item 
        \label{property-2}
        $\supp(\mu)\cap \Int(K)=\supp(\mu)\cap \Int(\widetilde K)$
    \item 
        $\supp(\mu)\cap \iso(K)=
        \supp(\mu)\cap \iso(\widetilde K)$,
    \item 
        \label{property-3}
        $\Lambda \cap \partial \widetilde K=\emptyset$, 
    \item
        \label{property-4}
            $\Card(\partial \widetilde K)=\Card(\partial K)$. 
\end{enumerate}
To prove Claim 1 it only remains to prove that 
$L|_{\Pos_{\leq 2k}(\widetilde K)\setminus \{0\}}>0$.
Since $\widetilde K$ contains all atoms of $\mu$, it follows that
$L|_{\Pos_{\leq 2k}(\widetilde K)\setminus \{0\}}\geq 0$. 
Assume that there exists $p\in \Pos_{\leq 2k}(\widetilde K)\setminus \{0\}$ such that $L(p)=0$. 
Let $p_0$ be obtained from $p$ by moving each zero of $p$ on $[a_i,\widetilde a_i]$ or $[\widetilde a_i,a_i]$  to $a_i$ together with multiplicity, i.e., every factor $(x-\alpha)^{t}$ of $p$ where $\alpha\in (a_i,\widetilde a_i]$ or 
$\alpha\in [\widetilde a_i,a_i)$ and $t$ is largest possible, is replaced by $(x-a_i)^{t}$. By construction, $0\neq p_0\in \Pos_{\leq 2k}(K)$ and $L(p_0)=0$ (since $\supp(\mu)\subseteq \cZ(p_0)$), which is a contradiction.
This proves Claim 1.\hfill $\blacksquare$\\ 

By Claim 1, we may assume that $\Lambda\cap \partial K=\emptyset$.
Let $S^{\pi}_{K,\odd}$ and $S^{\pi}_{K,\even}$ stand for all $f\in S_K^{\pi}$ of odd and even degree, respectively.
Denoting by $\oplus$ the direct sum of matrices,
we define a linear matrix function 
    $$
    \cL(x,y):=\cL_1(x)\oplus \cL_2(x,y),
    $$
where
    $$
    \cL_1(x):=
    \bigoplus_{f\in S_{K,\odd}^{\pi}}\cH_{f,(\gamma,x)}
    \quad\text{and}\quad
    \cL_2(x,y):=
    \bigoplus_{f\in S^{\pi}_{K,\even}}\cH_{f,(\gamma,x,y)}.
    $$
Let us write
\begin{equation*}
    \cS_{\cL}^{\succ}
    :=\left\{(x,y)\in \RR^2\colon \cL(x,y)\succ 0\right\}
    \quad
    (\text{resp. } 
    \cS_{\cL}^{\succeq}:=\left\{(x,y)\in \RR^2\colon \cL(x,y)\succeq 0\right\}).
\end{equation*}
Let 
   $\proj_x:\RR^2\to\RR$ 
be the projection to the first coordinate, i.e.,        $\proj_x(x,y)=x.$\\

\noindent \textbf{Claim 2:} 
$\proj_x \big(\cS_{\cL}^{\succeq}\big)$ is an interval with a non-empty interior.\\

\noindent\textit{Proof of Claim 2.}
Define 
$\gamma:=(\gamma_0,\gamma_1,\ldots,\gamma_{2k})$,
where $\gamma_i=L(x^i)$ for each $i$.
By the first part of the proof we have that $\cM^{(\fa)}_{L,K,\Lambda}\neq \emptyset$.
Hence, $\gamma$ has an infinite extension 
\begin{equation}
    \label{extension-231023}
        (\gamma,\gamma_{2k+1},\gamma_{2k+2},\ldots)
\end{equation}
generated by moments
of some measure $\mu \in \cM^{(\fa)}_{L,K,\Lambda}$.
We have that
$$\cH_{f,(\gamma,\gamma_{2k+1},\ldots \gamma_{2k+2\ell})}
=\cH_{1,f\cdot (\gamma,\gamma_{2k+1},\ldots \gamma_{2k+2\ell})}\succeq 0$$ 
for each 
$f\in S^{\pi}_K$ and every $\ell\in \NN$.
In particular, $\cL(\gamma_{2k+1},\gamma_{2k+2})\succeq 0$.
Hence, the sets $\cS_{\cL}^{\succeq}$ and $\proj_x \big(\cS_{\cL}^{\succeq}\big)$ 
are non-empty.

Since $\proj_x \big(\cS_{\cL}^{\succeq}\big)$ is a projection of $\cS_{\cL}^{\succeq}$, it is convex and hence an interval.
It remains to prove that $\proj_x \big(\cS_{\cL}^{\succeq}\big)$ is not a singleton. 
Assume on the contrary that 
\begin{equation}
    \label{assumption}
    \proj_x \big(\cS_{\cL}^{\succeq}\big)=\{x_0\}\quad \text{for some }x_0\in \RR. 
\end{equation}
Then $\gamma_{2k+1}$ in \eqref{extension-231023} is uniquely determined and equal to $x_0$.
We separate two cases according to the existence of 
$f\in S^\pi_{K,\odd}$ such that
$
\cH_{f,(\gamma,x_0)}
$ is singular.\\

\noindent\textbf{Case 1:} 
\textit{There exists $f\in S^\pi_{K,\odd}$ such that
$
\cH_{f,(\gamma,x_0)}
=
\cH_{1,f\cdot (\gamma,x_0)}
$
is singular.}

It follows by Theorem \ref{Hamburger} that each $\gamma_{2k+i}$ in \eqref{extension-231023} is uniquely determined
by $\gamma$. 
But then
\begin{equation}
\label{eq:cardinality}    
    \Card(\cM_{L,K,\Lambda})
    =
    \Card(\cM^{(\fa)}_{L,K,\Lambda})=1.
\end{equation}
For $t\in K\setminus \Lambda$ let $\ev_{t}:\RR[x]\to \RR$ be a functional defined on each $x^i$ by $\ev_t(x^i):=t^i$. 
Due to finite dimensionality there exists $\epsilon_{t}>0$ such that
$(L-\epsilon_t \ev_t)|_{T_{S_K}^{(2k)}\setminus\{0\}}>0$.
It follows that $\cM^{(\fa)}_{L-\epsilon_t \ev_t,K,\Lambda}\neq \emptyset$
by the first part of the theorem. Hence, any $t\in K\setminus \Lambda$ is in the support of some measure from $\cM^{(\fa)}_{L,K,\Lambda}$.
Therefore $\Card(\cM_{L,K,\Lambda})=\infty$, which is in contradiction with \eqref{eq:cardinality}. 
So in this case \eqref{assumption} cannot hold.\\

\noindent\textbf{Case 2:} 
\textit{For all $f\in S^\pi_{K,\odd}$ we have that
$
\cH_{f,(\gamma,x_0)}\succ 0.
$}

Let us write
\begin{equation*}
    \cS_{\cL_2}^{\succ}
    :=\left\{(x,y)\in \RR^2\colon \cL_2(x,y)\succ 0\right\}
    \quad
    (\text{resp. } 
    \cS_{\cL_2}^{\succeq}:=\left\{(x,y)\in \RR^2\colon \cL_2(x,y)\succeq 0\right\}).
\end{equation*}
Note that
\begin{align*}
(x_0,y)\in \cS_{\cL_2}^{\succeq}
\quad& \Leftrightarrow \quad 
\cH_{f,(\gamma,x_0,y)}\succeq 0 \quad \text{for all }f\in S_{K,\even}^{\pi},\\
(x_0,y)\in \cS_{\cL_2}^{\succ}
\quad& \Leftrightarrow \quad 
\cH_{f,(\gamma,x_0,y)}\succ 0 \quad \text{for all }f\in S_{K,\even}^{\pi}.
\end{align*}
By the form of $\cH_{f,(\gamma,x_0,y)}$, the solution set of $\cH_{f,(\gamma,x_0,y)}\succ 0$ is an open interval of the form $(a,\infty)$ or $(-\infty,a)$ for some $a\in \RR$ and the solution set of $\cH_{f,(\gamma,x_0,y)}\succeq 0$ is then either $[a,\infty)$ or $(-\infty,a]$. Therefore $\cS_{\cL_2}^{\succeq}:=\{(x_0,y_0)\}$ is either a singleton or
$\proj_y\big(\cS_{\cL_2}^{\succeq}\big):=[y_1,y_2]$ is an interval with a non-empty interior, where 
   $\proj_y:\RR^2\to\RR$ 
is the projection to the second coordinate, i.e., $\proj_y(x,y)=y$. In the first case there exists  $f\in S^\pi_{K,\even}$ such that
$
\cH_{f,(\gamma,x_0,y_0)}
$
is singular and by the same reasoning as in Case 1 above, the equalities \eqref{eq:cardinality} should hold, which leads to a contradiction. In the second case $\proj_y\big(\cS_{\cL_2}^{\succ}\big)=(y_1,y_2)$ and there is $(x_0,y)\in \cS_{\cL_2}^{\succ}$. But then $(x_1,y)\in \cS_{\cL}^{\succ}$ for some $x_1$ close enough to $(x_0,y)$, which is a contradiction with \eqref{assumption}.\\

This proves the Claim 2.
\hfill$\blacksquare$\\

Fix $f\in S^{\pi}_{K,\even}$ and $x_0$ from the interior of
$\proj_x \big(\cS_{\cL}^{\succeq}\big)$.
Let $y_{f,x_0}\in \RR$ be such that $\cH_{f,(\gamma,x_0,y_{f,x_0})}\succeq 0$ and 
$\cH_{f,(\gamma,x_0,y_{f,x_0})}\not\succ 0$. Namely, $y_{f,x_0}$ is uniquely determined by the equality
\begin{equation}
    \label{def-of-gamma-2k+1}
    \widetilde L(fx^{2k+2-\deg f})=
    v^T
    (\cH_{f,\gamma})^{-1}
    v
\end{equation}
as the moment of $x^{2k+2}$,
where 
$$v=\left(L_{f,(\gamma,x_0)}(x^{i})\right)_{i=k+1-\deg f/2}^{2k+1-\deg f}$$
and
$\widetilde L:\RR[x]_{\leq 2k+2}\to \RR$ 
is the extension of $L_{(\gamma,x_0)}:\RR[x]_{\leq 2k+1}\to \RR$.
By Theorem \ref{230422-1853},
the generating polynomial 
$p_{f\cdot (\gamma,x_0,y_{f,x_0})}$
has one of $\lambda\in \Lambda$ as its root only for countably many choices $x_0$.
Thus, as $f$ runs over the set $S^{\pi}_K$ only countably many $x_0$ are such that the generating polynomial $p_{f\cdot (\gamma,x_0,y_{f,x_0})}$
has one of $\lambda\in \Lambda$ as a root. 
So there exists $\widetilde x$ in the interior of $\proj_x \big(\cS_{\cL}^{\succeq}\big)$
such that none of the generating polynomials 
$p_{f\cdot (\gamma,\widetilde x,y_{f,\widetilde x})}$
has some $\lambda\in \Lambda$
as a root. 
Choosing this $\widetilde x$ and the smallest $y_{f,\widetilde x}$ over all $f\in S^\pi_{\even}$ gives $\widetilde \gamma=(\gamma,\widetilde x,y_{f,\widetilde x})$ such that $L_{\widetilde \gamma}$ is $K$--positive. 
Moreover, if there are more choices of $f$, we choose one of the lowest degree.
By \cite[Th\'eor\`eme II, p.\ 129]{Tch57}, we have that $\cM_{L_{\widetilde \gamma},K}\neq \emptyset$.
Since $L_{f,\widetilde \gamma}$ is singular, Theorem \ref{Hamburger} implies that the $K$--rm $\nu$ for $L_{f\cdot \widetilde \gamma}$  
is unique and supported on $\cZ(p_{f\cdot(\gamma,\widetilde x,y_{f,\widetilde x})}).$
Since for every $\mu\in \cM_{L_{\widetilde \gamma},K}$ we have that 
$f\cdot \mu\in \cM_{L_{f\cdot \widetilde \gamma},K}$ (see \eqref{def:new-measure}), it follows that 
$f\cdot \mu=\nu$. Hence, $\supp(\mu)\subseteq \cZ(f)\cup\cZ(p_{f\cdot(\gamma,\widetilde x,y_{f,\widetilde x})})$. 
If $\cZ(f)\not\subseteq \supp(\mu)$, 
then there exists $\widetilde f\in S^{\pi}$ of lower degree than $f$, such that 
$L_{\widetilde f,\widetilde \gamma}$ is also singular (since
$\widetilde f\cdot \mu\in \cM_{L_{\widetilde f\cdot \widetilde\gamma, K}}$). 
But this is a contradiction with the choice of $f$,
whence $\supp(\mu)=\cZ(f)\cup\cZ(p_{f\cdot(\gamma,\widetilde x,y_{f,\widetilde x})})$.
Note that the size of this union is $\deg(f)+\deg p_{f\cdot(\gamma,\widetilde x,y_{f,\widetilde x})}$,
which is at most $2\ell_1+ (\frac{2k-2\ell_1}{2}+1)=k+\ell_1+1$.
This proves \ref{thm:nonsingular-part2} of the moreover part.\\

Let us now prove 
\ref{thm:nonsingular-part1} of the moreover part. Note that 
$\ell_1=0$,
$S^\pi_{K,\even}=\{1\}$ and 
$$S^\pi_{K,\odd}=   
    \left\{
        \begin{array}{rl}   
            \emptyset,& \text{if }K=\RR,\\
            x-a,&   \text{if }K=[a,\infty),\\
            a-x,&   \text{if }K=(-\infty,a].
        \end{array}
    \right.$$
The proof is now verbatim the same to the proof of part \ref{thm:nonsingular-part2}.\\

Next we prove 
\ref{thm:nonsingular-part3} of the moreover part. Assume that $K$ is bounded from above and $b:=\max(K)$.
By the first part of the theorem there exists $\mu\in \cM^{(\fa)}_{L,K,\Lambda}$. Then the support of $\mu$ is contained in $[a,b]$ for some $a\in \Int(K)\setminus \Lambda$ and $a\notin \supp(\mu)$. Let $\widetilde K:=K\cap [a,b]$. Note that $\partial \widetilde K\subseteq \partial K\cup \{a\}$,
$\Card(\partial \widetilde K)\leq 2\ell_1+\ell_2+1$ and $L$ is strictly $\widetilde K$--positive. 
Indeed, since $\supp(\mu) \subseteq \widetilde K$, $L$ is clearly $\widetilde K$--positive.
It remains to show that it is strictly $\widetilde K$--positive. If $L(p)=0$ for some $0\neq p|_{\widetilde K}\geq 0$,
then $p=p_1p_2$ with $\supp(\mu)\subseteq \cZ(p_1)\subset (a,\infty)$ and $\cZ(p_2)\subseteq (-\infty,a]$. From $\cZ(p_1)\subset (a,\infty)$ it follows that
$p_1$ has a constant sign on $(-\infty,a]$ and from $\cZ(p_2)\subseteq (-\infty,a]$ 
it follows that $p_2$ has a constant sign on $(a,\infty)$.
From $p|_{\widetilde K}\geq 0$ and a constant sign of $p_2$ on $\widetilde K$, 
it follows that $p_1$ has constant sign on $\widetilde K$. Multiplying $p_1$ with 
$-1$ if necessary we can assume that $p_1|_{\widetilde K}\geq 0$. Also $L(p_1)=0$,
because $\supp(\mu)\subseteq \cZ(p_1)$. Since $p_1$ does not change sign on both $\widetilde K$ and on $(-\infty,a]$, $a\in \widetilde K$ and $p_1(a)\neq 0$, it follows that $p_1$
does not change the sign on $K$. But then $L(p_1)=0$ for $0\neq p_1|_{K}\geq 0$
and $L$ is not strictly $K$--positive,
which is a contradiction.
By \ref{thm:nonsingular-part2}, part \ref{thm:nonsingular-part3} for $K$ bounded from above follows.
If $K$ is bounded from below, the proof is analogous.\\

Finally we prove 
\ref{thm:nonsingular-part4} of the moreover part.
The proof is analogous to the proof of \ref{thm:nonsingular-part3} above only that $b\neq \max(K)$ but merely $a,b\in \Int(K)\setminus (\Lambda\cup \supp(\mu))$, and hence $\Card(\partial \widetilde K)\leq 2(\ell_1+1)+\ell_2$.
\end{proof}

The following is the solution to the singular case of the $K$--TMP for $L$.

\begin{theorem}[Singular case]
\label{compact-TRMP-extended}
Let $K\subseteq \RR$ be a closed set and $\Lambda\subset \RR$ be a finite or a countable closed set
such that $\Lambda\cap \iso(K)=\emptyset$.
Let $L:\RR[x]_{\leq 2k}\to \RR$ be a linear functional
with
$L|_{T_{S_K}^{(2k)}\setminus\{0\}}\not >0$, where $S_K$ is the natural description of $K$.
The following statements are equivalent:
	\begin{enumerate}
		\item
        \label{rational-solution}
            $\cM_{L,K,\Lambda}\neq \emptyset$.
		\item
          \label{pt2-rational-solution}
            $\cM^{(\fa)}_{L,K,\Lambda}\neq \emptyset$.
        \item 
            \label{pt3-rational-solution}
                The following statements hold:
                \begin{enumerate}
                    \item 
                \label{point-a}
                $L|_{T_{S_K}^{(2k)}\setminus\{0\}}\geq 0$.
                \item
                \label{point-b}
                If:
                \begin{enumerate}
                    \item 
                        $f_0\in S^{\pi}$ is a polynomial of the lowest degree in $S^{\pi}$ such that $L_{f_0}$ is singular,                    
                    \item 
                        $0\neq p_{f_0}$ is a polynomial
                        of the lowest degree such that 
                        $p_{f_0}^2 \in \ker L_{f_0}$,        
                \end{enumerate}
                then 
                \begin{equation}
                    \label{ass:not-poles}
                    \cZ(f_0p_{f_0})\cap \Lambda=\emptyset.
                \end{equation}
                \item 
                \label{point-c}
                If $K$ is unbounded,
                then
                \begin{equation}
                    \label{additional}
                x^dp_{f_0}^2
                \in \ker
                L_{f_0},
                \end{equation}
                where $f_0$, $p_{f_0}$ are
                as in \eqref{point-b} and 
                $d:=2k-\deg (f_0p_{f_0}^2)$.
                \end{enumerate}
	\end{enumerate}

    Moreover, if $\cM^{(\fa)}_{L,K,\Lambda}\neq \emptyset$,
    then the representing measure $\mu$ for $L$ is unique and 
        $$\supp(\mu)=\cZ(f_0)\cup\cZ(p_{f_0}).$$
\end{theorem}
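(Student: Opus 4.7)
The strategy is to prove the cyclic chain $(\ref{pt2-rational-solution}) \Rightarrow (\ref{rational-solution}) \Rightarrow (\ref{pt3-rational-solution}) \Rightarrow (\ref{pt2-rational-solution})$, with the first implication trivial (a finitely atomic measure is a positive Borel measure), and to extract the moreover part from the $(\ref{rational-solution}) \Rightarrow (\ref{pt3-rational-solution})$ argument.

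For $(\ref{rational-solution}) \Rightarrow (\ref{pt3-rational-solution})$, I fix $\mu \in \cM_{L,K,\Lambda}$ and first establish the support identity $\supp(\mu) = \cZ(f_0) \cup \cZ(p_{f_0})$. The inclusion $\supp(\mu) \subseteq \cZ(f_0 p_{f_0})$ is immediate from $L(f_0 p_{f_0}^2) = L_{f_0}(p_{f_0}^2) = 0$ together with the nonnegativity of $f_0 p_{f_0}^2$ on $K$. For the reverse inclusion I argue by contradiction using the minimality assumptions: if $\alpha \in \cZ(f_0) \setminus \supp(\mu)$, write $f_0 = g \cdot h$ with $g \in S_K$ the factor vanishing at $\alpha$ and $h \in S^{\pi}$ of strictly lower degree, and perform an atom-by-atom analysis of $\int g h p_{f_0}^2 \, d\mu = 0$ (using the test polynomial $p_{f_0}$, or $(x - b_j) p_{f_0}$ when $g = (x - \alpha)(x - b_j)$ is quadratic, in order to annihilate the contribution at the second root $b_j$) to produce a nonzero square in $\ker L_h$, contradicting the minimality of $\deg f_0$. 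Analogously, if $\alpha \in \cZ(p_{f_0}) \setminus \supp(\mu)$, writing $p_{f_0} = (x - \alpha) \tilde p$ and expanding
\[
    L_{f_0}(\tilde p^2) = \sum_{\beta \in \supp(\mu)} \mu(\{\beta\}) f_0(\beta) \tilde p(\beta)^2
\]
term-by-term (each summand vanishes because $0 = L_{f_0}(p_{f_0}^2)$ forces $\mu(\{\beta\}) f_0(\beta) (\beta - \alpha)^2 \tilde p(\beta)^2 = 0$ and $(\beta - \alpha)^2 > 0$ whenever $\beta \neq \alpha$) yields $\tilde p^2 \in \ker L_{f_0}$, contradicting the minimality of $\deg p_{f_0}$. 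With the support equality in hand, part (a) is immediate, part (b) follows from $\supp(\mu) \cap \Lambda = \emptyset$ (since $\supp(\mu)$ is finite and $\mu(\Lambda) = 0$), and part (c) follows because $x^d f_0 p_{f_0}^2$ vanishes pointwise on $\supp(\mu)$. This simultaneously proves $(\ref{rational-solution}) \Rightarrow (\ref{pt2-rational-solution})$ and the support assertion of the moreover part.

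For $(\ref{pt3-rational-solution}) \Rightarrow (\ref{pt2-rational-solution})$, my plan is to apply Theorem \ref{Hamburger} to the $f_0$-localizing functional $L_{f_0}$: condition (a) yields square-positivity of $L_{f_0}$ (since $f_0 \cdot \sigma \in T_{S_K}^{(2k)}$ for every $\sigma \in \sum \RR[x]^2$ of appropriate degree); $L_{f_0}$ is singular by the choice of $f_0$; and condition (c) supplies exactly the extra column relation $x^d p_{f_0}^2 \in \ker L_{f_0}$ required by Theorem \ref{Hamburger}(3)(c) in the unbounded case, while for bounded $K$ the corresponding requirement is met by classical compactness arguments applied to $L_{f_0}$. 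This produces a unique $\nu \in \cM_{L_{f_0}, \RR}$ supported on $\cZ(p_{f_0})$ with positive weights $\sigma_v$. A short $K$-positivity argument (for a hypothetical $\alpha \in \cZ(p_{f_0}) \setminus K$, construct from the gap structure of $\RR \setminus K$ a polynomial $g \geq 0$ on $K$ with $g(\alpha) < 0$, then isolate $\alpha$ by a squared Lagrange interpolator $h^2$ vanishing on $\cZ(p_{f_0}) \setminus \{\alpha\}$, and contradict $L_{f_0}(g h^2) \geq 0$) shows $\cZ(p_{f_0}) \subseteq K$. The candidate representing measure is then
\[
    \mu := \sum_{v \in \cZ(p_{f_0})} \frac{\sigma_v}{f_0(v)}\, \delta_v + \sum_{v \in \cZ(f_0)} \tau_v\, \delta_v,
\]
with $\tau_v \geq 0$ to be determined by the remaining moment equations of $L$; condition \eqref{ass:not-poles} then guarantees $\supp(\mu) \cap \Lambda = \emptyset$.

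The main obstacle will be twofold. First, I must verify $\cZ(f_0) \cap \cZ(p_{f_0}) = \emptyset$ so that $f_0(v) \neq 0$ in the first sum; this should follow from a refined use of the minimality of $p_{f_0}$ together with the support analysis of $\nu$, and is consistent with the identity $f_0 \cdot \mu = \nu$ implicit in the $(\ref{rational-solution}) \Rightarrow (\ref{pt3-rational-solution})$ direction. Second, I must ensure that the weights $\tau_v$ at the zeros of $f_0$ can be chosen nonnegatively; this requires leveraging the $K$-positivity of $L$ against the structure of $S_K$, together with the a priori count $|\cZ(f_0) \cup \cZ(p_{f_0})| \leq 2k+1$. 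For the remainder of the moreover part, uniqueness of $\mu$ is immediate once its support is pinned down to the finite set $\cZ(f_0) \cup \cZ(p_{f_0})$: the weights then solve a Vandermonde-type system using any $|\supp(\mu)|$ moments, all within the degree budget $\leq 2k$.
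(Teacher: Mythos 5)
Your direction $(1)\Rightarrow(3)$ and the derivation of the \emph{moreover} part are essentially sound and in fact fill in more detail than the paper does at that step: the atom-by-atom test-polynomial argument (using $p_{f_0}$, or $(x-b)p_{f_0}$ when the missing factor $g\in S_K$ is quadratic with the other root $b$) to force $\cZ(f_0)\subseteq \supp(\mu)$, and the peeling-off of a linear factor to force $\cZ(p_{f_0})\subseteq \supp(\mu)$, are exactly the right way to make rigorous the terse claims ``there exists $f_1\in S^{\pi}$ of lower degree with $L_{f_1}$ singular'' and ``there exists $p$ of lower degree with $L_{f_0}(p^2)=0$.''

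The gap is in $(3)\Rightarrow(2)$, where your route diverges from the paper's and leaves the hard part undone. You propose to first solve the $\RR$--TMP for $L_{f_0}$ to get $\nu$, then lift to a measure for $L$ by dividing the densities of $\nu$ by $f_0$ and adding atoms at $\cZ(f_0)$ with unknown nonnegative weights $\tau_v$. Two of the steps you flag as ``obstacles'' are in fact the crux and are not resolved: (i) in the bounded case you claim Theorem~\ref{Hamburger}'s extra column-relation hypothesis for $L_{f_0}$ ``is met by classical compactness arguments,'' but condition~\eqref{additional} is \emph{not} assumed when $K$ is bounded, so Theorem~\ref{Hamburger}(3)(c) is not available to you; you would have to prove that relation, which is essentially as hard as the theorem itself; (ii) the weights $\tau_v\geq 0$ are left entirely to an unstated ``leveraging of $K$-positivity.'' A Vandermonde system determines the $\tau_v$ uniquely, but nonnegativity is not a consequence of linear algebra, and you give no mechanism for it. The paper sidesteps both issues cleanly: from $L|_{T_{S_K}^{(2k)}}\geq 0$ and \cite[Theorem~2.4]{CF08} (the truncated Riesz--Haviland theorem) it obtains a $K$--representing measure for $L_1:=L|_{\RR[x]_{\leq 2k-1}}$ outright, then shows that \emph{any} such measure is supported on $\cZ(f_0)\cup\cZ(p_{f_0})$ and hence unique, and finally matches the top moment $L(x^{2k})$ by Tchakaloff's theorem (compact case) or by condition~\eqref{additional} (unbounded case). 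In short: the existence is obtained abstractly and then pinned down, rather than assembled coordinate-by-coordinate. I would rework $(3)\Rightarrow(2)$ along those lines, or else supply an actual argument for nonnegativity of the $\tau_v$ and for the applicability of Theorem~\ref{Hamburger} to $L_{f_0}$ when $K$ is bounded.
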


\begin{proof}
The nontrivial part is to prove
$
\eqref{pt3-rational-solution}
\Rightarrow
\eqref{pt2-rational-solution}.$
Let $K_1=K\setminus \Lambda$. Since
$\Lambda\cap \iso(K)=\emptyset$, it follows that 
$\Pos(K)=\Pos(K_1)$. 
Since $L|_{T_{S_K}^{(2k)}}\geq 0$,
it follows by  \cite[Theorem 2.4]{CF08} that $L_1:=L|_{\RR[x]_{\leq 2k-1}}$ has a $K$--rm.
We will prove that $\Card(\cM_{L_1,K})=1$.
Let $\mu\in \cM_{L_1,K}$. 
Since by assumption $0=L_{f_0}(p_{f_0}^2)$, it follows by $K$--positivity of $L$ that     
    $$
        0=L_{f_0}(p_{f_0})=(L_1)_{f_0}(p_{f_0})
        =\int_{K}f_0p_{f_0}d\mu,
    $$
whence $\supp(\mu)\subseteq \cZ(f_0)\cup\cZ(p_{f_0})$. 
If $\supp(\mu)\neq \cZ(f_0)\cup\cZ(p_{f_0})$, then either $\cZ(f_0)\not\subseteq \supp(\mu)$ or  $\cZ(p_{f_0})\not\subseteq \supp(\mu)$. 
In the first case there exists $f_1\in S^{\pi}$ of lower degree than $f_0$, such that
$L_{f_1}$ is also singular, which is a contradiction.
In the second case there exists $p$ of lower degree than $p_{f_0}$, such that
$L_{f_0}(p^2)=0$, which is a contradiction.
Hence, $\supp(\mu)=\cZ(f_0)\cup\cZ(p_{f_0})$ and $\mu\in \cM_{L_1,K}$ is uniquely determined.
By \eqref{ass:not-poles}, it follows that 
$\mu\in \cM^{(\fa)}_{L_1,K, \Lambda}$.
We separate 3 cases according to $K$ and $\deg f_0$.
\begin{itemize}
\item
If $K$ is compact, then, by \cite[Th\'eor\`eme II, p.\ 129]{Tch57},
$L$ has a $K$--rm and hence $\mu$ must also represents $L(x^{2k})$.
\item 
If $K$ is unbounded, then \eqref{additional} ensures $L(x^{2k})$ is also a moment of $\mu$,
whence $\mu\in \cM^{(\fa)}_{L,K,\Lambda}$.
\end{itemize}
This concludes the proof of the theorem.
\end{proof}

\begin{remark}
\label{remark1}
\begin{enumerate}
    \item 
A special case of Theorems \ref{thm:nonsingular} and \ref{compact-TRMP-extended} for compact $K$ with finite $\Lambda$, which comes from $\cL$ (see \eqref{functional-rational}) with only real poles allowed and $k_0=0$, is \cite[Proposition 2]{Cha94}. In states that $\cM_{L,K,\Lambda}\neq \emptyset$ is equivalent to  $L|_{T_{S_{K}}^{(2k)}}\geq 0$.
However, by Example \ref{counterexample} below this equivalence does not hold.
In the proof of \cite[Proposition 2]{Cha94} it is only proved that 
$L$ being $K$--positive implies that $\cM_{L,K}\neq \emptyset$. 
But by Proposition \ref{cL-and-L-measures} above more is needed, namely  
$\cM_{L,K,\Lambda}\neq \emptyset$.
    \item 
    \label{remark-1-pt2}
    If $K$ is unbounded and \eqref{point-a} and \eqref{point-b} of 
    Theorem \ref{compact-TRMP-extended} are satisfied, then
    $L|_{\RR[x]_{\leq 2k-1}}$
    has a $K$--rm $\mu$ vanishing on $\Lambda$,
    while
    $
    L(x^{2k})
    \geq
    \int_K x^{2k}\dd\mu.
    $
    Condition \eqref{point-c}
    characterizes when the equality occurs in this
    inequality.\hfill$\vartriangle$
\end{enumerate}
\end{remark}

The following example demonstrates that $K$--positivity of the functional $L$ is not sufficient for the existence of a rm in the $K$--RTMP.
The \textit{Mathematica} file with numerical computations can be found on the link \url{https://github.com/ZalarA/RTMP_univariate}.

\begin{example}
\label{counterexample}
Let $K=[0,1]$, $\lambda_1=0$, $\lambda_2=1$, 
$\cR^{(4)}=
\left\{
    \frac{f}{x^2(x-1)^2}\colon f\in \RR[x]_{\leq 4}
\right\}$
and $\cL:\cR^{(4)}\to \RR$ a linear functional defined by
\begin{align*}
\cL(1)&=\gamma_{0}^{(0)}:=\frac{1}{48},\quad
\cL\left(\frac{1}{x}\right)=\gamma_{1}^{(1)}:=\frac{1}{24},\quad
\cL\left(\frac{1}{x^2}\right)=\gamma_{2}^{(1)}=\frac{5}{12},\\
\cL\left(\frac{1}{x-1}\right)&=\gamma_{1}^{(2)}:=-\frac{1}{24},\quad
\cL\left(\frac{1}{(x-1)^2}\right)=\gamma_{2}^{(2)}:=\frac{5}{12}.
\end{align*}
The corresponding functional
$
L:\RR[x]_{\leq 4}\to \RR$ is defined by
$$
L(1)=1,\quad
L(x)=\frac{1}{2},\quad
L(x^2)=\frac{5}{12},\quad
L(x^3)=\frac{3}{8},\quad
L(x^4)=\frac{17}{48}.
$$
The localizing Hankel matrices determining whether $L|_{T^{(4)}_{S_{[0,1]}}}\geq 0$ holds
are
\begin{align*}
\cH_{1,\gamma}
&=
\begin{pmatrix}
    1 & \frac{1}{2} & \frac{5}{12}\\[0.3em]
    \frac{1}{2} & \frac{5}{12} & \frac{3}{8}\\[0.3em]
    \frac{5}{12} & \frac{3}{8} & \frac{17}{48}    
\end{pmatrix},
\;
\cH_{x,\gamma}
=
\begin{pmatrix}
    \frac{1}{2} & \frac{5}{12}\\[0.3em] 
    \frac{5}{12} & \frac{3}{8}
\end{pmatrix},
\;
\cH_{1-x,\gamma}
=
\begin{pmatrix}
    \frac{1}{2} & \frac{1}{12}\\[0.3em] 
    \frac{1}{12} & \frac{1}{24}
\end{pmatrix},
\\[0.3em]
\cH_{x(1-x),\gamma}
&=
\begin{pmatrix}
    \frac{1}{12} & \frac{1}{24}\\[0.3em] 
    \frac{1}{24} & \frac{1}{48}
\end{pmatrix}.
\end{align*}
They are all psd with the eigenvalues
$\approx 1.54, 0.22, 0.007$ for $\cH_{1,\gamma}$,
$\approx 0.86, 0.016$ for $\cH_{x,\gamma}$,
$\approx 0.51, 0.027$ for $\cH_{1-x,\gamma}$,
and
$\frac{5}{48}, 0$ for $\cH_{x(1-x),\gamma}$.
Note that $p(x)=x-\frac{1}{2}$ is a column relation of
$\cH_{x(1-x),\gamma}$ and thus the unique measure
for $L$ consists of the atoms $0,1,\frac{1}{2}$
all with densities $\frac{1}{3}$.
Hence, $\cL$ does not have a $K$--rm, even though $L$ is $K$--positive.
\end{example}

\section{Examples}
\label{sec:examples}

 In this section we derive the solution to the strong Hamburger TMP (Corollary \ref{cor:STHMP}) and the TMP on the unit circle (see Theorem \ref{circle-TMP}), and give an example demonstrating the construction of the minimal representing measure as in the proof of Theorem \ref{thm:nonsingular}.\\

A special case of 
Theorem \ref{compact-TRMP-extended}
is the solution to the \textit{strong truncated Hamburger moment problem}.

\begin{corollary}
[{\cite[Theorem 3.1]{Zal22b}}]
\label{cor:STHMP}
Let $\cL$ be a linear functional on
$$
\cR
=\left\{\frac{f}{x^{2k_1}}\colon f\in \RR[x]_{\leq 2k}\right\},
$$
where $k\geq k_1$.
Let
$L:\RR[x]_{\leq 2k}\to \RR$
be a linear functional defined by 
$L(f):=\cL\big(\frac{f}{x^{2k_1}}\big).$
Then $\cL$ has a $\RR$--representing measure 
if and only if the following stament hold:
\begin{enumerate}
    \item $L$ is square--positive.
    \item If $L$ is singular, then:
    \begin{enumerate}
        \item 
        The generating polynomial $p$ of $\cH_{1}$ does not 
        have 0 as its root.
        \item 
        $x^{2k-2\deg p} \cdot p^2\in \ker L$.
    \end{enumerate}
\end{enumerate}
\end{corollary}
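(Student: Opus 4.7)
The plan is to recognize the corollary as the specialization of the general $K$-RTMP framework to $K = \RR$, $q(x) = x^{2k_1}$ (single real pole $\lambda_1 = 0$), and $\Lambda = \{0\}$, and then apply Theorems \ref{thm:nonsingular} and \ref{compact-TRMP-extended}. First, the natural description of $K = \RR$ is $S_K = \emptyset$, so $S_K^{\pi} = \{1\}$ and $T_{S_K}^{(2k)} = \sum \RR[x]_{\leq k}^2$. Hence the hypotheses $L|_{T_{S_K}^{(2k)}\setminus\{0\}} \geq 0$ (resp.\ $> 0$) in the main theorems translate precisely to square-positivity (resp.\ nonsingular square-positivity) of $L$. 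Since $\iso(\RR) = \emptyset$, the condition $\Lambda\cap\iso(K) = \emptyset$ is automatic. By Proposition \ref{cL-and-L-measures}, $\cL$ admits an $\RR$-rm if and only if $\cM_{L,\RR,\{0\}} \neq \emptyset$, which reduces the claim to characterizing when this latter set is nonempty.

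If $L$ is nonsingular, Theorem \ref{thm:nonsingular} directly produces a finitely atomic $\RR$-rm avoiding $0$; condition (1) is the hypothesis and the conditional (2) is vacuous. For the converse in this branch, any representing measure forces square-positivity of $L$ via $L(g^2) = \int g^2\,\dd\mu \geq 0$. If instead $L$ is singular but square-positive, I would apply Theorem \ref{compact-TRMP-extended}. Since $S_K^{\pi} = \{1\}$, the distinguished element is $f_0 = 1$, so $L_{f_0} = L$ and $p_{f_0}$ is the lowest-degree polynomial with $p_{f_0}^2 \in \ker L$.

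The main bookkeeping step, which I expect to be the only subtlety, is to identify $p_{f_0}$ with the generating polynomial $p$ of $\cH_{1,\gamma}$. On one hand, $\hat p$ lies in the kernel of the smallest singular leading principal submatrix $\cH_{1,\gamma^{(2\ell)}}$, so $L(p^2) = \hat p^{\,T}\cH_{1,\gamma^{(2\ell)}}\hat p = 0$, exhibiting $p$ as a candidate. On the other hand, any polynomial $q$ of strictly smaller degree with $q^2 \in \ker L$ would, by psd-ness, force $\cH_{1,\gamma^{(2\deg q)}}$ to be singular, contradicting the minimality of $\deg p = \ell$. With $p_{f_0} = p$, condition \eqref{point-b} of Theorem \ref{compact-TRMP-extended} reads $\cZ(p)\cap\{0\} = \emptyset$, i.e., $p(0) \neq 0$, which is (2)(a); and since $\RR$ is unbounded, condition \eqref{point-c} reads $x^{2k - 2\deg p}\,p^2 \in \ker L$, which is (2)(b). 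Together with square-positivity \eqref{point-a} this closes the equivalence.
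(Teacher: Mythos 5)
Your proposal is correct and follows essentially the same route as the paper: reduce via Proposition \ref{cL-and-L-measures} to $\cM_{L,\RR,\{0\}}\neq\emptyset$, note $S_\RR^\pi=\{1\}$ so $T_{S_\RR}^{(2k)}=\sum\RR[x]_{\leq k}^2$, and invoke Theorems \ref{thm:nonsingular} and \ref{compact-TRMP-extended} with $f_0=1$. The identification of $p_{f_0}$ with the generating polynomial of $\cH_{1,\gamma}$, which you verify via the minimality-of-singularity argument, is exactly the (implicit) bookkeeping the paper relies on.
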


\begin{proof}
Take $K=\RR$, $\Lambda=\{0\}$ and use Theorems
\ref{thm:nonsingular} and \ref{compact-TRMP-extended}. Then
$S_\RR=\{1\}$
and
$T_{S_\RR}^{(2k)}=\sum \RR[x]_{\leq k}^2$.
\end{proof}

A special case of 
Theorem \ref{compact-TRMP-extended}
is the solution to the TMP, which can be used to solve the TMP on the unit circle.

\begin{corollary}
\label{circle-rational}
Let $\cL$ be a linear functional on
$
\cR
=\left\{\frac{f}{(x^2+1)^{\ell_1}}\colon f\in \RR[x]_{\leq 2k}\right\}$,
where $k\geq \ell_1$.
Let
$L:\RR[x]_{\leq 2k}\to \RR$ be a linear functional defined by $L(f):=\cL\big(\frac{f}{(x^2+1)^{\ell_1}}\big).$
Then $\cL$ has a $\RR$--representing measure 
if and only if the following statements hold:
\begin{enumerate}
    \item $L$ is square--positive.
    \item
    \label{circle-pt2}
    If $L$ is singular and $p$ 
    is the generating polynomial of $\cH_{1}$,
    then $x^{2k-2\deg p}\cdot p^2\in \ker L$.
\end{enumerate}
\end{corollary}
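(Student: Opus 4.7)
The plan is to specialize Theorems \ref{thm:nonsingular} and \ref{compact-TRMP-extended} to $K=\RR$ with denominator $q(x)=(x^2+1)^{\ell_1}$, whose zeros $\pm i$ are complex. Since $q$ has no real zeros, the set $\bigcup_{j=1}^p\{\lambda_j\}$ of real poles appearing in Proposition \ref{cL-and-L-measures} is empty, so that proposition reduces to $\cM_{\cL,\RR}\neq\emptyset \Leftrightarrow \cM_{L,\RR}\neq\emptyset$; a real measure cannot charge the complex points $\pm i$ in the first place.

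Next I would identify the natural description $S_\RR$. Since $\RR$ has neither a least nor a greatest element and contains no gaps, conditions (a)--(c) of the definition of $S_K$ produce no polynomials; hence $S_\RR^\pi=\{1\}$ and $T_{S_\RR}^{(2k)}=\sum\RR[x]^2_{\leq k}$. Consequently the hypothesis $L|_{T_{S_\RR}^{(2k)}\setminus\{0\}}>0$ becomes strict square-positivity of $L$, while $L|_{T_{S_\RR}^{(2k)}}\geq 0$ becomes square-positivity.

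In the nonsingular case, part \ref{thm:nonsingular-part1} of Theorem \ref{thm:nonsingular}, applied with $K=\RR$ and $\Lambda=\emptyset$, immediately produces a $(k+1)$-atomic $\RR$-representing measure for $L$. In the singular case I would invoke Theorem \ref{compact-TRMP-extended} with $f_0=1$ (the only element of $S_\RR^\pi$), so that $L_{f_0}=L$ and $p_{f_0}$ is the lowest-degree polynomial $p$ with $p^2\in\ker L$; by the discussion in \S\ref{Hankel-mat-ext}, this $p$ is precisely the generating polynomial of $\cH_1$. Condition \eqref{point-b} of Theorem \ref{compact-TRMP-extended} is vacuous because $\Lambda=\emptyset$, and condition \eqref{point-c}, triggered because $\RR$ is unbounded, reads $x^{2k-2\deg p}p^2\in\ker L$, matching condition \eqref{circle-pt2} of the corollary.

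No genuine obstacle arises; the corollary is essentially a transcription of the general theorems once the specialization $K=\RR$, $\Lambda=\emptyset$ is made. The only point needing a brief check is that the lowest-degree $p$ with $p^2\in\ker L$ coincides with the generating polynomial of $\cH_1$, which is standard Curto--Fialkow material \cite[Theorem 2.4]{CF91}.
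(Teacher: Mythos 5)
Your proposal is correct and follows exactly the paper's approach: specialize Theorems \ref{thm:nonsingular} and \ref{compact-TRMP-extended} with $K=\RR$, $\Lambda=\emptyset$, note $S_\RR^\pi=\{1\}$ so $T_{S_\RR}^{(2k)}=\sum\RR[x]^2_{\leq k}$, and read off the two conditions (with the vacuity of condition \eqref{point-b} and the identification of $p_{f_0}$ with the generating polynomial of $\cH_1$ via \cite[Theorem 2.4]{CF91}). You actually supply more detail than the paper's one-line proof, including the explicit observation that $\pm i\notin\RR$ renders Proposition \ref{cL-and-L-measures} trivial here.
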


\begin{proof}
Take $K=\RR$, $\Lambda=\emptyset$ and use Theorems
\ref{thm:nonsingular} and \ref{compact-TRMP-extended}. Then
$S_\RR=\{1\}$
and
$T_{S_\RR}^{(2k)}=\sum \RR[x]_{\leq k}^2$.
\end{proof}

\begin{remark}
\label{circle-remark}
    If $L$ in Corollary \ref{circle-rational} is only square--positive,
    then as in Remark \ref{remark1}.\eqref{remark-1-pt2},
    the restriction $\cL|_{\cR^{(2k-1)}}$,
    where 
    $
    \cR^{(2k-1)}
    =\big\{\frac{f}{(x^2+1)^{\ell_1}}\colon f\in \RR[x]_{\leq 2k-1}\big\}
    $,
    has some $\RR$--rm $\mu$,
    while
    $
    \cL\big(\frac{x^{2k}}{(x^2+1)^{\ell_1}}\big)
    \geq
    \int_\RR \frac{x^{2k}}{(x^2+1)^{\ell_1}}\dd\mu.
    $
    Condition \eqref{circle-pt2} in Corollary \ref{circle-rational}
    characterizes, when the equality occurs in this
    inequality.\hfill$\vartriangle$
\end{remark}

Below we will use Corollary \ref{circle-rational} to derive a solution to the TMP on the unit circle.\\

Let $k\in \NN$ and $\beta\equiv\beta^{ (2k)}=\{\beta_{i,j}\}_{i,j\in \ZZ_+,\; 0\leq i+j\leq 2k}$ be a bivariate sequence of degree $2k$.
The functional $L_{\beta}:\mbb{R}[x,y]_{\leq 2k}\to \RR$ , defined by 
$$
	L_{\beta}(p):=\sum_{\substack{i,j\in \ZZ_+,\\ 0\leq i+j\leq 2k}} a_{i,j}\beta_{i,j},
\quad \text{where}\quad 
p=
	\sum_{\substack{i,j\in \ZZ_+,\\ 0\leq i+j\leq 2k}} a_{i,j}x^iy^j,
$$
is called the \textbf{Riesz functional of $\beta$}. $L_\beta$ is called \textbf{square--positive}
if $L_\beta(p^2)\geq 0$ for every $p\in \RR[x,y]_{\leq k}$.
For $p\in \RR[x,y]$ we denote by 
$\cZ(p)=\{(x,y)\in \RR^2\colon p(x,y)=0\}$
its set of zeros.\\

The solution to the TMP on the unit circle is the following.

\begin{theorem}[{\cite[Theorem 2.1]{CF02}}]
	\label{circle-TMP}
	Let $p(x,y)=x^2+y^2-1$ and 
	$\beta:=\beta^{(2k)}=(\beta_{i,j})_{i,j\in \ZZ_+,i+j\leq 2k}$, where $k\geq 2$. 
	Then the following statements are equivalent:
	\begin{enumerate}
		\item\label{circle-pt1}
		      $\beta$ has a $\mc Z(p)$--representing measure.
		\item\label{circle-pt3}
                $L_\beta$ is square--positive
                %
                and
                the relations $\beta_{2+i,j}+\beta_{i,2+j}=\beta_{i,j}$ hold for every $i,j\in \ZZ_+$ with $i+j\leq 2k-2$.
	\end{enumerate}
\end{theorem}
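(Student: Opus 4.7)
The plan is to reduce to the univariate rational truncated moment problem via the stereographic projection $\phi:\RR\to\cZ(p)\setminus\{(-1,0)\}$, $\phi(t)=\bigl(\tfrac{1-t^2}{1+t^2},\tfrac{2t}{1+t^2}\bigr)$, and to invoke Corollary \ref{circle-rational} together with Remark \ref{circle-remark}. Throughout, I will write $\phi^*f(t):=f(x(t),y(t))$ for the pullback.

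The direction \eqref{circle-pt1}$\Rightarrow$\eqref{circle-pt3} is immediate: for any $\cZ(p)$--representing measure $\nu$ of $\beta$, integrating squares against $\nu$ gives square-positivity of $L_\beta$, and the relations follow by integrating the identity $(x^2+y^2-1)x^iy^j\equiv 0$ on $\cZ(p)$.

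For \eqref{circle-pt3}$\Rightarrow$\eqref{circle-pt1}, the relations $\beta_{2+i,j}+\beta_{i,2+j}=\beta_{i,j}$ force $L_\beta$ to vanish on $(x^2+y^2-1)\cdot\RR[x,y]_{\leq 2k-2}$, so $L_\beta$ factors through the $(4k+1)$--dimensional quotient $\RR[x,y]_{\leq 2k}/\bigl((x^2+y^2-1)\cdot\RR[x,y]_{\leq 2k-2}\bigr)$. Since $\phi^*$ identifies this quotient with $\cR^{(4k)}$ (a linear map between spaces of matching dimension $4k+1$ whose injectivity follows from density of $\phi(\RR)$ in $\cZ(p)$), I would define $\cL:\cR^{(4k)}\to\RR$ by $\cL(\phi^*f):=L_\beta(f)$ for $f\in\RR[x,y]_{\leq 2k}$, and set $L(f):=\cL\bigl(f/(1+t^2)^{2k}\bigr)$ on $\RR[t]_{\leq 4k}$. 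Square-positivity of $L$ follows from an analogous bijection $r\mapsto r(x(t),y(t))(1+t^2)^k$ between $\RR[x,y]_{\leq k}/(x^2+y^2-1)$ and $\RR[t]_{\leq 2k}$: writing $g=r(x(t),y(t))(1+t^2)^k$ yields $L(g^2)=\cL(r^2(x(t),y(t)))=L_\beta(r^2)\geq 0$.

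Remark \ref{circle-remark} then furnishes an $\RR$--representing measure $\mu$ for $\cL|_{\cR^{(4k-1)}}$ with $c:=\cL\bigl(\tfrac{t^{4k}}{(1+t^2)^{2k}}\bigr)-\int_\RR\tfrac{t^{4k}}{(1+t^2)^{2k}}\,\dd\mu\geq 0$. I would set $\nu:=\phi_*\mu+c\cdot\delta_{(-1,0)}$ and verify it represents $\beta$. For $j>0$, the pullback $\phi^*(x^iy^j)=\tfrac{(1-t^2)^i(2t)^j(1+t^2)^{2k-i-j}}{(1+t^2)^{2k}}$ has numerator of degree $4k-j<4k$, hence lies in $\cR^{(4k-1)}$ and is integrated exactly by $\mu$, while the atom at $(-1,0)$ contributes $0$ since $y=0$ there. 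For $j=0$, the identity $\phi^*\bigl(\tfrac{(1-x)^{2k}}{2^{2k}}\bigr)=\tfrac{t^{4k}}{(1+t^2)^{2k}}$ allows isolation of the leading $(-1)^it^{4k}$ coefficient, and the resulting defect $(-1)^ic$ between $\cL$ and $\int_\RR\dd\mu$ is cancelled exactly by the atomic contribution $c\cdot(-1)^i$ of $\delta_{(-1,0)}$. The main obstacle throughout is the handling of this ``point at infinity'' for $\phi$: a representing measure may charge $(-1,0)$ with positive mass, which the naive pushforward $\phi_*\mu$ misses, and the nonnegative constant $c$ furnished by Remark \ref{circle-remark} is precisely the device that recovers this missing mass as the defect of $\mu$ at the single top-degree rational $t^{4k}/(1+t^2)^{2k}$.
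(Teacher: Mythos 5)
Your proposal is correct and follows essentially the same route as the paper: reduce to the $\RR$--RTMP for $\cL$ on $\cR^{(4k)}$ via a rational parametrization of the circle, verify square--positivity of the associated $L$ by pulling back squares, invoke Corollary \ref{circle-rational} together with Remark \ref{circle-remark} to obtain $\mu$ and the nonnegative defect $c$, and finally add $c$ times the Dirac mass at the missed point to recover a $\cZ(p)$--representing measure. The only (cosmetic) difference is that you parametrize by $x(t)=(1-t^2)/(1+t^2)$ so the missed point is $(-1,0)$, whereas the paper uses $x(t)=(t^2-1)/(t^2+1)$ and adds the atom at $(1,0)$; your well--definedness argument for $\cL$ via the quotient $\RR[x,y]_{\leq 2k}/(x^2+y^2-1)\RR[x,y]_{\leq 2k-2}\cong\cR^{(4k)}$ is an equivalent (and slightly more explicit) packaging of what the paper does implicitly through the formulas \eqref{circle-univariate}--\eqref{univariate-moments}.
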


\begin{proof}
The non-trivial implication is $\eqref{circle-pt3}\Rightarrow
\eqref{circle-pt1}$.
Due to the relations $\beta_{2+i,j}+\beta_{i,2+j}=\beta_{i,j}$ for every $i,j\in \ZZ_+$ with $i+j\leq 2k-2$,
$L_\beta(q)=0$ for every $q\in \RR[x,y]_{\leq 2k}$ 
of the form $q=(x^2+y^2-1)q_1$ with 
$q_1\in \RR[x,y]_{\leq 2k-2}$.
Let
$x(t)=\frac{t^2-1}{t^2+1}$,
$y(t)=\frac{2t}{t^2+1},$
$t\in \RR$,
be a rational parametrization of $\cZ(p)$, 
which is one-to-one and onto $\cZ(p)\setminus \{(1,0)\}.$ We have $\frac{1}{t^2+1}     =\frac{1}{2}(1-x(t))$, $\frac{t}{t^2+1}     =\frac{1}{2}y(t)$ and $\frac{t^2}{t^2+1}
    =\frac{1}{2}(1+x(t))$.
Hence,
\begin{align}
\label{circle-univariate}
\begin{split}
\frac{t^i}{(t^2+1)^{2k}}
    &=
    \left\{
    \begin{array}{rl}
    \frac{1}{2^{2k}}(1-x(t))^{2k-i} (y(t))^i,&
    i=0,\ldots,2k,\\[1em]
    \frac{1}{2^{2k}}(1+x(t))^{i-2k} (y(t))^{4k-i},&
    i=2k+1,\ldots,4k,\\
    \end{array}
    \right.\\[0.5em]
\frac{t^i}{(t^2+1)^{k}}
    &=
    \left\{
    \begin{array}{rl}
    \frac{1}{2^{k}}(1-x(t))^{k-i} (y(t))^i,&
    i=0,\ldots,k,\\[1em]
    \frac{1}{2^{k}}(1+x(t))^{i-k} (y(t))^{2k-i},&
    i=k+1,\ldots,2k.
    \end{array}
    \right.
\end{split}
\end{align}
Let 
$\cR^{(4k)}:=
\left\{
\frac{f(t)}{(t^2+1)^{2k}}
\colon 
f\in \RR[t]_{\leq 4k}
\right\}
$
and
define the functional
$\cL:\cR^{(4k)}\to \RR$
by
\begin{equation}
\label{univariate-moments}
\cL\Big(\frac{t^i}{(t^2+1)^{2k}}\Big)=
\left\{
\begin{array}{rl}
\frac{1}{2^{2k}}
L_{\beta}\big((1-x)^{2k-i}y^i\big),&
i=1,\ldots,2k,\\[1em]
\frac{1}{2^{2k}}L_{\beta}\big((1+x)^{i-2k}y^{4k-i}\big),&
i=2k+1,\ldots,4k.
\end{array}
\right.
\end{equation}
Let 
$L:\RR[t]_{\leq 4t}\to \RR$
be a linear functional defined by 
$L(f):=\cL\big(\frac{f}{(t^2+1)^{2k}}\big)$.
Using correspondences
\eqref{circle-univariate}--\eqref{univariate-moments}
for $g\in\RR[t]_{\leq 2k}$, we have that
$$L(g^2)=
\cL\Big(\frac{g^2}{(t^2+1)^{2k}}\Big)
=
L_{\beta}(
p_1^2)
\geq 0
$$
for some $p_1\in \RR[x,y]_{\leq k}$. 
Hence, $L$ is square--positive.
By Corollary \ref{circle-rational} and Remark \ref{circle-remark}, there exists a $\RR$--rm $\mu$ for
    $\cL|_{\cR^{(4k-1)}}$,
    where 
    $\cR^{(4k-1)}:=\left\{\frac{f(t)}{(t^2+1)^{2k}}\colon
    f\in \RR[t]_{\leq 4k-1}\right\}$,
    while
    $$
    \Delta:=\cL\Big(\frac{t^{4k}}{(t^2+1)^{2k}}\Big)
    -
    \int_K \frac{t^{4k}}{(t^2+1)^{2k}}\dd\mu\geq 0.
    $$
    If $\Delta=0$, then the pushforward measure 
    $\phi_{\#}(\mu)$, where 
    $$\phi:\RR\to \cZ(p)\setminus \{(1,0)\},
    \quad \phi(t)=(x(t),y(t)),
    $$
    is a $\cZ(p)$--rm
    for $L_\beta$.
    Otherwise we add the atom $(1,0)$
    with the density 
    $\Delta$
    to $\phi_{\#}(\mu)$
    and we a get a $\cZ(p)$--rm
    for $L_\beta$.
\end{proof}

\begin{remark}
The proof of Theorem \ref{circle-TMP} is done using the solution to the trigonometric moment problem \cite{CF91}.\hfill$\vartriangle$
\end{remark}

The following example demonstrates the construction of the representing measure for a functional similarly as in the proof of Theorem \ref{thm:nonsingular} (but allowing $\Lambda\cap \partial K\neq \emptyset$).
A minimal $K$--representing measure avoiding real poles does not exist, but allowing one more atom such a measure exists. The \textit{Mathematica} file with numerical computations can be found on the following link \url{https://github.com/ZalarA/RTMP_univariate}.

\begin{example}
\label{counterexample-2}
Let $K=(-\infty,0]\cup [1,2]\cup [3,\infty)$, $\lambda_1=0$, $\lambda_2=1$,
$\lambda_3=2$,
$$\cR^{(6)}=
\left\{
    \frac{f}{x^2(x-1)^2(x-2)^2}\colon f\in \RR[x]_{\leq 6}
\right\}$$
and $\cL:\cR^{(6)}\to \RR$ a linear functional, defined by
\begin{align*}
\cL(1)&=\gamma_{0}^{(0)}:=\frac{1539}{128},\quad
\cL\left(\frac{1}{x}\right)=\gamma_{1}^{(1)}:=\frac{-255}{64},\quad
\cL\left(\frac{1}{x^2}\right)=\gamma_{2}^{(1)}=\frac{235}{32},\\
\cL\left(\frac{1}{x-1}\right)&=\gamma_{1}^{(2)}:=\frac{3}{64},\quad
\cL\left(\frac{1}{(x-1)^2}\right)=\gamma_{2}^{(2)}:=\frac{313}{96},\\
\cL\left(\frac{1}{x-2}\right)&=\gamma_{1}^{(3)}:=\frac{253}{64},\quad
\cL\left(\frac{1}{(x-2)^2}\right)=\gamma_{2}^{(3)}:=\frac{713}{96}.
\end{align*}
The corresponding functional
$
L:\RR[x]_{\leq 6}\to \RR$ is defined by
\begin{align*}
L(1)
&=1,\quad
    L(x)=\frac{13}{12},\quad
    L(x^2)=\frac{23}{8},\quad
    L(x^3)=\frac{307}{48},\quad
    L(x^4)=\frac{555}{32},\\
L(x^5)
&=\frac{9043}{192},\quad
    L(x^6)=\frac{17203}{128}.
\end{align*}
Let $f_1(x)=x(x-1)$, $f_2(x)=(x-2)(x-3)$ and $f_3=f_1f_2$.
The localizing Hankel matrices determining whether $L|_{T^{(6)}_{S_{K}}}\geq 0$ holds are
\begin{align*}
\cH_{1,\gamma}
&=
\begin{pmatrix}
 1 & \frac{13}{12} & \frac{23}{8} & \frac{307}{48} \\[0.3em]
 \frac{13}{12} & \frac{23}{8} & \frac{307}{48} & \frac{555}{32} \\[0.3em]
 \frac{23}{8} & \frac{307}{48} & \frac{555}{32} & \frac{9043}{192} \\[0.3em]
 \frac{307}{48} & \frac{555}{32} & \frac{9043}{192} & \frac{17203}{128} 
\end{pmatrix},
\;
\cH_{f_1,\gamma}
=
\begin{pmatrix}
\frac{43}{24} & \frac{169}{48} & \frac{1051}{96} \\[0.3em]
 \frac{169}{48} & \frac{1051}{96} & \frac{5713}{192} \\[0.3em]
 \frac{1051}{96} & \frac{5713}{192} & \frac{33523}{384} 
\end{pmatrix},
\\[0.3em]
\cH_{f_2,\gamma}
&=
\begin{pmatrix}
    \frac{83}{24} & -\frac{71}{48} & \frac{251}{96} \\[0.3em]
 -\frac{71}{48} & \frac{251}{96} & -\frac{239}{192} \\[0.3em]
 \frac{251}{96} & -\frac{239}{192} & \frac{1139}{384} \\
\end{pmatrix},
\cH_{f_3,\gamma}
=
\begin{pmatrix}
 \frac{131}{32} & -\frac{247}{64} \\[0.3em]
 -\frac{247}{64} & \frac{539}{128} 
\end{pmatrix}.
\end{align*}
They are all pd with the eigenvalues
$\approx 153.6, 1.44, 0.46, 0.12$ for $\cH_{1,\gamma}$,
$\approx 98.9, 0.85, 0.31$ for $\cH_{f_1,\gamma}$,
$\approx 6.74, 1.71. 0.58$ for $\cH_{f_2,\gamma}$,
and
$8.01, 0.29$ for $\cH_{f_3,\gamma}$.
Now we will check that there is no pair $(\gamma_{7},\gamma_{8})\in \RR^2$, 
such that for the extension $\widetilde\gamma=(\gamma,\gamma_{7},\gamma_{8})$ it holds that 
$\cH_{1,\widetilde \gamma}$ is psd and singular, while 
$\cH_{f_1,\widetilde \gamma}$,
$\cH_{f_2,\widetilde \gamma}$,
$\cH_{f_3,\widetilde \gamma}$ 
are psd. Using Schur complements we have:
\begin{align}
\label{lower-bound-gamma-2k+2-v1}
\begin{split}
\cH_{1,\widetilde \gamma}\succeq 0 
&\quad\Leftrightarrow\quad
\gamma_{8}\geq 0 \quad \text{and} \quad \cH_{1,\widetilde \gamma}/\cH_{1,\gamma}\geq 0,\\
\cH_{f_1,\widetilde \gamma}\succeq 0 
&\quad\Leftrightarrow\quad
\gamma_{8}-\gamma_{7}\geq 0 \quad \text{and} \quad  \cH_{f_1,\widetilde \gamma}/\cH_{f_11,\gamma}\geq 0,\\
\cH_{f_2,\widetilde \gamma}\succeq 0 
&\quad\Leftrightarrow\quad
\gamma_{8}-5\gamma_{7}+6\gamma_{6}\geq
0\quad \text{and} \quad 
\cH_{f_2,\widetilde \gamma}/\cH_{f_2,\gamma}\geq 0,\\
\cH_{f_3,\widetilde \gamma}\succeq 0 
&\quad\Leftrightarrow\quad
\gamma_{8}-6\gamma_{7}+11\gamma_{6}-6\gamma_{5}\geq 
0\quad \text{and} \quad 
\cH_{f_3,\widetilde \gamma}/\cH_{f_3,\gamma}\geq 0.
\end{split}
\end{align}
Let $f_0:=1$. We have that
\begin{align}
\label{lower-bound-gamma-2k+2}
\begin{split}
    \cH_{f_0,\widetilde\gamma}/\cH_{f_0,\gamma}
        &=\gamma_8-\frac{220}{591}\gamma_7^2+\frac{15971773 }{56736}\gamma_7-\frac{4733803996639}{87146496},\\[0.2em]
    \cH_{f_1,\widetilde\gamma}/\cH_{f_1,\gamma}
        &=\gamma_8-\frac{11 }{39}\gamma_7^2+\frac{3154553 }{14976}\gamma_7-\frac{6505636110821}{161021952},\\[0.2em]
    \cH_{f_2,\widetilde\gamma}/\cH_{f_2,\gamma}
        &=
\gamma_8-\frac{376 }{369}\gamma_7^2+\frac{13927589 }{17712}\gamma_8-\frac{29105958864401}{190439424},\\[0.2em]
    \cH_{f_3,\widetilde\gamma}/\cH_{f_3,\gamma}
        &=\gamma_8-\frac{131 }{75}\gamma_7^2+\frac{38902817 }{28800}\gamma_7-\frac{11603048263019}{44236800}.
\end{split}
\end{align}
Computation with \textit{Mathematica} shows that there is no choice of $\gamma_{7}\in \RR$ such that for 
$\gamma_{8}=\max(0,\cH_{1,\widetilde \gamma}/\cH_{1,\gamma})$,
we would have $\cH_{f_i,\widetilde \gamma}\geq 0$
$i=1,2,3$. Namely, the conditions in \eqref{lower-bound-gamma-2k+2-v1},\eqref{lower-bound-gamma-2k+2} are of the form 
$\gamma_{8}\geq \max(0,q_i(\gamma_{7}))$,
where $q_i(\gamma_7)$ is a quadratic function in $\gamma_{7}$ corresponding to $f_i$. So the question is whether there exists $\gamma_{7}$ such that $\max(0,q_0(\gamma_{7}))\geq \max(0,q_i(\gamma_{7}))$ for each $i$. Since this is not true, the example shows that there is no minimal representing measure for $L$ that would be supported on $K$ vanishing on poles.

Choosing $\gamma_7=370$ and $\gamma_8=2000$
and repeating the computations above for $\gamma_9$
and $\gamma_{10}$ it turns out that for every 
$\gamma_9\in [-71.50, 845.19]$, the moment matrix $\cH_{f_0}$ corresponding to $f_0$ restricts $\gamma_{10}$ the most from below among all $f_i$, $i=0,1,2,3$. Hence, choosing the smallest possible $\gamma_{10}$ for $\gamma_9$  from this interval gives a representing measure for $\gamma$
supported on the zeroes of the generating polynomial of 
$\cH_{f_0}$.
At most three choices of $\gamma_{9}$ will be such that $\lambda_i$ is one of the zeros of the generating polynomial, so we avoid those.
\end{example}

\end{document}